\newcommand{\xm}{{X_{mix}}}
\newcommand{\vm}{{V_{mix}}}
\numberwithin{equation}{section}
\theoremstyle{plain}
\newtheorem{example}{Example}
\newtheorem{corollary}{Corollary}[section] 
\newtheorem{definition}{Definition}[section] 
\newtheorem{lemma}{Lemma}[section] 
\newtheorem{thm}{Theorem}[section]
\newcounter{foo}
\begin{document}

\begin{frontmatter}

\title{Computational information geometry: theory and practice}
\runtitle{Computational information geometry}


\author{\fnms{Karim} \snm{Anaya-Izquierdo}\ead[label=e1]{karim.anaya@lshtm.ac.uk}}
\address{London School of Hygiene and Tropical Medicine,  Keppel Street, London   WC1E 7HT, UK \printead{e1}}

\and
\author{\fnms{Frank} \snm{Critchley}\ead[label=e2]{f.critchley@open.ac.uk}}
\address{Department of Mathematics and Statistics,          The Open University, Walton Hall, Milton Keynes, Buckinghamshire. MK7 6AA, UK \\  \printead{e2}}

\and
\author{\fnms{Paul } \snm{Marriott}\ead[label=e3]{pmarriot@uwaterloo.ca}}
\address{Department of Statistics and Actuarial Science,    University of Waterloo, 200 University Avenue West, Waterloo, Ontario, Canada N2L 3G1 \\\printead{e3}}

\and
\author{\fnms{Paul W.} \snm{Vos}\ead[label=e4]{vosp@ecu.edu}}
\address{Department of Biostatistics,    East Carolina University  2435C Health Sciences Building, Greenville,  NC 27858-4353 USA \\ \printead{e4}}

\runauthor{Anaya-Izquierdo et al. }

\begin{abstract}
This paper lays the foundations for a unified framework for numerically and computationally applying methods drawn from a range of currently distinct geometrical approaches to statistical modelling. In so doing, it extends information geometry from a manifold based approach to one where the simplex is the fundamental geometrical object, thereby allowing applications to models which do not have a fixed dimension or support. Finally, it starts to
build a computational framework which will act as a proxy for the Ôspace of all distributionsÕ that can be used, in particular, to investigate model selection and model uncertainty. A varied set of substantive running examples is used to illustrate theoretical and practical aspects of the discussion. Further developments are briefly indicated. 
\end{abstract}

\begin{keyword}[class=AMS]
\kwd[Primary ]{62F99}
\kwd[; secondary ]{62-04}
\end{keyword}

\begin{keyword} 
\kwd{Information geometry}
\kwd{Multinomial distribution}
\kwd{Affine geometry}
\kwd{Exponential family}
\kwd{Extended exponential family}
\end{keyword}



\end{frontmatter}

\section{Introduction}  \label{Introduction}

The application of geometry to statistical theory and practice has produced a number of different approaches and this paper will involve  three of these. The first  is  the  application of differential geometry to statistics, which is often called  information geometry. It largely focuses  on typically multivariate, invariant  and higher-order  asymptotic results  in full and  curved exponential families through the use of differential geometry and tensor analysis; key references include  \cite{Amar:1990}, \cite{Barn:Cox:1989},  \cite{Barn:Cox:1994},  \cite{Murr:Rice:1993}  and \cite{Kass:Vos:1997}.  Also included in this approach are consideration of curvature, dimension reduction and information loss, see \cite{Efro:1975} and \cite{Marr:Vos:On:2004}. 
The  second important,  but completely separate, approach is in the inferentially demanding area of mixture modelling, a major  highlight being  found in   \cite{Lind:mixt:1995} where convex geometry is shown to give great insight into the fundamental problems of inference in  these models and to help in the design of corresponding algorithms.  The third approach is the geometric study of graphical models, contingency tables, (hierarchical) log-linear models, and related topics involving the geometry of extended exponential families.   Important results with close connections to the approach in this paper can be found in \cite{Rina:Fein:Zhou:2009}  and  \cite{Rina:Fein:2011},     while the wider field of algebraic statistics is well-reviewed in  \cite{Pist:Rico:Wynn:2000} and \cite{Gibilisco:Wynn:Alge:2010}. 

This paper  has the following  four objectives:  (1) to use the tool  of  the extended multinomial distribution (see \cite{Brow:1986}, \cite{Rina:Fein:Zhou:2009}, \cite{Rina:Fein:2011} and   \cite{Csis:Matu:2005}) to construct a  framework which unifies all of the above geometric approaches; in particular, to show explicitly the links between information geometry, extended exponential families and Lindsay's mixture geometry, (2) to show how this unifying framework provides a natural home for numerically implementing algorithms based on the geometries described above,  (3)  to extend the results of information geometry from the traditional manifold based approach to models which do not have a fixed dimension or support, and (4) to start to  build a computational framework which will act as a proxy for the `space of all distributions' which can be used, in particular,  to investigate model selection and model uncertainty.  This paper lays the conceptual foundations for these goals, with more detailed developments to be found in later work. We call this numerical way of implementing geometric theory in statistics computational information geometry. No confusion should arise from the fact that the same name is given to a cognate, but distinct, topic in machine learning: see for example 
\cite{2009-CompInfoGeomMeaningDistance-CSL}.

In practice, a single statistical problem can involve more than one of the above geometries -- potentially all three -- this plurality being handled naturally in our unifying framework. Indeed, we use a varied set of substantive running examples to illustrate theoretical and practical aspects of the development.   Examples \ref{binomial example} and \ref{tripod example} (Section  \ref{examples defined}) are chosen to illustrate computational information geometric issues in mixture models.  Example \ref{censored exponential  example} shows issues in full and curved exponential families, while Example \ref{logistic regression} looks at the geometry of logistic regression.  To aid with visualisation additional low dimensional multinomial models are also introduced in the body of the paper.

The key idea of this paper  is to represent  statistical models --   sample spaces, together with  probability distributions on them --  and associated inference problems, inside adequately large but  finite dimensional  spaces.  In these  embedding spaces  the building blocks of the three geometries described above  can be  numerically computed  explicitly and the results  used for algorithm development.   In \S \ref{section on Discretisation}  and   \S \ref{Discussion and further work} we reflect on the generality of working in this finite, discrete framework even with models for continuous random variables.

Accordingly, after a possible initial discretisation,
the space of all distributions for the random variable of interest can be identified with the simplex, \begin{equation}\label{definition of extended multinomial}
\Delta^k:=\left\{\bf{\pi}=(\pi_0, \pi_1,\ldots,\pi_k)^\top\,:\, \pi_i\geq 0\,,\,\sum_{i=0}^k\pi_i=  1\right\},
\end{equation} together with a unique label for each vertex, representing the random variable. Modulo  discretisation,   this structure   therefore acts as a universal model.   Clearly, the multinomial family on $k+1$ categories  can be identified with the relative interior of this space, $int(\Delta^k)$, while the extended family allows the possibility of distributions with different support sets.

 The starting point for much of statistical inference is a working model for observed data comprising a set of  distributions on a sample space.   A  working model ${\cal M}$ can be represented by a subset of  $\Delta^k$ and may be specified by an explicit parameterisation, such as Example \ref{censored exponential  example},  or as the solution of a set of equations, such as Example  \ref{tripod example}.  Computational information geometry   explicitly uses the information geometry of $\Delta^k$ to numerically  compute statistically important features of ${\cal M}$. These features include properties of the likelihood, which can be nontrivial in many of the examples considered here,  the adequacy of first order asymptotic methods -- notably, via higher order asymptotic expansions -- curvature based dimension reduction and
inference in mixture models.

\subsection{Examples}\label{examples defined}

 For ease of reference the main examples considered in this paper  are briefly described here, together with the main points which they illustrate.

 \begin{example}\label{binomial example}{\bf Mixture of binomial distributions}
 This example comes from  \cite{Kupp:Hase:1978} where the authors state  that `simple one-parameter binomial and Poisson models generally provide poor fits to this type of binary data', and therefore it is of interest  to look  in a `neighbourhood' of these models. The extended multinomial space is a natural place to define such a `neighbourhood' and a new computational algorithm defined in \S \ref{Mixture Inference} is used for inference. 
 \end{example}

%
%
%

 \begin{example}\label{censored exponential  example} {\bf Censored exponential} This example looks at a continuous response variable -- a censored survival time. Section \ref{section on Discretisation} considers  applying the results of computational information geometry to models for continuous random variables while Theorems \ref{general information loss} and  \ref{exponential information loss theorem} show how this can be done with negligible loss for inference. In this case also  results on curvature based dimension reduction are illustrated.
 \end{example}

\begin{example}\label{logistic regression} {\bf Logistic regression}  This is a full exponential family that   lies in a very high dimensional  simplex when considered as a model for the joint distribution of  $N$ binary response variates. In this example,  both the existence  of the maximum likelihood estimate (see \cite{Rina:Fein:Zhou:2009} and \cite{Rina:Fein:2011})  and higher order approximations to sampling  distributions are considered. 
%
%
%
%
%
%
%
%
\end{example}

\begin{example}\label{tripod example} {\bf Tripod model}
The tripod  example is discussed in \cite{Zwie:Smith:2011b} and \cite{Zwie:Smith:2011a}.  The  directed graph is shown in Fig.~\ref{hiddenvarfig}, where  there are  binary variables $X_i$,  $i=1, 2, 3$, on each of the terminal nodes these being assumed independent given the binary variable at the internal node $H$. In the model, it is assumed  $H$ is hidden (i.e. not observed)  so the model is a mixture of members of  an exponential family. Despite the model's  apparent simplicity, the mixture structure can generate multiple modes in the likelihood,  illustrating difficult identification issues. 
  \begin{figure}[htbp] 
   \centering
   \includegraphics[width=2in]{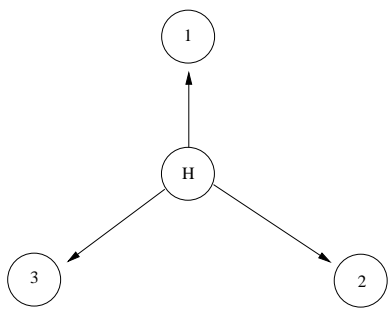}
   \caption{Graph for Tripod model}
   \label{hiddenvarfig}
\end{figure}
\end{example}

\subsection{Discretisation}\label{section on Discretisation}

The  approach taken in this paper is  inherently discrete and  finite. Sometimes, this is with no loss at all, the models used involve only such random variables.  In general,  suitable finite partitions  of the sample space can be  used,  for which  an appropriate theory is developed.   While this is clearly not the most general case mathematically speaking  (an  equivalence relation  being thereby induced), it does provide an excellent foundation on which to  construct a computational  theory.  Furthermore, since real world measurements can only be made to a fixed precision all models can  -- arguably, should --   be thought of as fundamentally categorical. The relevant question for a computational theory is then:  what is the effect on  the inferential objects of interest  of a particular  selection  of such categories?   This is looked at in Theorem  \ref{general information loss} and  \ref{exponential information loss theorem}.

%

\setcounter{foo}{\value{example}}
\setcounter {example} {1}
\begin{example}[continued]  Here   the data taken from \cite{Hand:Daly:Lunn:1994}, while being treated as continuous, is only recorded at integer number of days. Thus as far as any statistical analysis that can be carried out is concerned there is literally   zero  loss in treating it as sparse categorical.  For Figs.~\ref{example2fig2.pdf} and \ref{example2fig4.pdf}  a further level of coarseness was added  by selecting bins of size $4$ days. As can be seen from the likelihood plot, Fig.~\ref{example2fig2.pdf},  there is effectively no inferential loss in such a choice.
\end{example}

\setcounter{example}{\value{foo}}

\subsection{Structure of paper}

The paper is structured as follows. Section \ref{Geometry of extended multinomial distrubution} looks at the information geometry of $\Delta^k$.  It shows the geometry to be both   explicit  and tractable.  In particular, the way that  global geometry determines the relationship between the natural and mean parameters of exponential families is discussed in \S \ref{Finite dimensional case}. The Fisher information is also key and results on its spectrum are found in  \S \ref{Spectrum of Fisher Information}, while the shape of the likelihood function is discussed in \S \ref{Likelihood in simplex}.  Section \ref{Duality and closure} looks at  the importance of  understanding  the closure of $\Delta^k$,  and of exponential families embedded in $\Delta^k$, where we consider the computation of limit points and the corresponding behaviour  of maximum likelihood estimates.  Direct applications of the numerical approach are discussed  in Section \ref{Classical information geometry}. Issues considered include:   using  higher order asymptotic methods, such as Edgeworth and saddlepoint expansions  and, also,  dimension reduction and information loss.   Section \ref{Mixture Inference} looks at the way that the mixture  geometry of \cite{Lind:mixt:1995}   fits naturally into the computational information geometry framework. In this section, Examples  \ref{binomial example} and \ref{tripod example} show the utility of the methods. Again the issue of dimension, this time in the $-1$-geometry, comes to the fore. Throughout, proofs and more technical discussions are found in the appendices.

\section{Geometry of extended multinomial distribution}\label{Geometry of extended multinomial distrubution}

The key idea behind  computational information geometry is that models can be embedded in a computationally tractable space with little loss to the inferential problem of interest. Information geometry is constructed from two different affine geometries related in a  non-linear way via duality and the Fisher information, see \cite{Amar:1990} or \cite{Kass:Vos:1997}.  In the full exponential family context, one affine structure (the so-called $+1$ structure) is defined by the natural parameterization, the second (the $-1$ structure) by the mean  parameterization.  The closure of exponential families has been studied by \cite{Barn:1978}, \cite{Brow:1986}, \cite{Laur:1996} and \cite{Rina:2006} in the finite dimensional case and  by \cite{Csis:Matu:2005} in the infinite dimensional case.    One important difference in the approach taken here is that limits of families of distributions, rather than pointwise limits, are central.

This paper constructs  a theory of information geometry following that  introduced by   \cite{Amar:1990} via the affine space construction introduced by \cite{Murr:Rice:1993}  and extended by \cite{Marr:on:2002}. Since this paper  concentrates on categorical random variables, the following definitions are appropriate. Consider   a  finite set of disjoint categories or bins $\mathcal B=\{B_i\}_{i\in A}$. Any distribution over this finite set of categories is defined by a set $\{ \pi_i\}_{i \in A}$ which defines the corresponding probabilities.

\begin{definition}\label{def1}The $-1$-affine space structure over distributions on  $\mathcal B:=\{B_i\}_{i\in A}$   is $\left(\xm, \vm, +\right)$  where
$$\xm= \left\{ \{ x_i\}_{i \in A}  | \sum_{i \in A} x_i =1  \right\}, \vm= \left\{   \{v_i\}_{i \in A} | \sum_{i \in A} v_i =0  \right\} $$ and the addition operator $+$  is the usual addition of  sequences.
\end{definition}
In Definition \ref{def1} the space of (discretised) distributions  is  a $-1$-convex subspace of the affine space  $ \left(\xm, \vm, +  \right) $.  A similar affine structure for the $+1$-geometry, once the support has been fixed, can be derived from the definitions in \cite{Murr:Rice:1993}.

The extended multinomial family, over $k+1$-categories, characterized by the closed simplex of probabilities $\Delta^k$ defined in (\ref{definition of extended multinomial}) will be the  computationally tractable space. For these families, the  $\pm 1$ dual affine geometries are explicit,  the only `hard' computational tasks being the non-linear mapping between convex subsets of affine spaces and the computation of the mixed parameterization, as defined in  \cite{Barn:Blae:1983}. Furthermore,  the Fisher information and its inverse are explicit and, perhaps more relevantly due to its potentially high order (the dimension of the simplex)  and non-constant rank, there are good ways of understanding and bounding its spectrum, as shown in \S \ref{Spectrum of Fisher Information}.

It is important to clarify why the closed extended multinomial distribution is used. First, in many examples the data is sparse in the sense that the sample size is much smaller than $k+1$, the number of categories, so that the likelihood, both in the multinomial and sometimes in the embedded models, is typically maximized on the boundary.  Second, it will  be shown that the  global shape of the likelihood function is determined by boundary behaviour.  Third, first order asymptotic approximations are rarely uniform across $\Delta^k$ and the higher
order asymptotic expansions of computational information geometry can indicate when the boundary is inferentially
relevant.  Finally, the link between information geometry and  Lindsay's mixture geometries is defined by using   the boundary of  $\Delta^k$.

The probability simplex, and sub-models embedded in it,  have been extensively studied in the  geometric approach to graphical models, see \cite{Rina:Fein:Zhou:2009}, \cite{Rina:Fein:2011}. In this literature, other sampling schemes than the multinomial are also studied, boundary issues again being shown to have great importance. One of the important new features here is the application of the full information geometry machinery  to these models.

\subsection{Geometry of extended trinomial distribution}\label{Finite dimensional case}

 To illustrate the information geometry of the extended multinomial distribution, the trinomial case is now described explicitly.  The general case in fact will follow by obvious extensions, and shown later (Section \ref{finite duality}), unless  the dimension is so large that numerically evaluating sums becomes impractical, see \cite{Geig:Heck:Strat:2001}.

\begin{example}\label{trinomial example}
 An explicit example of the information geometry of the extended trinomial model   is shown in  Fig.~\ref{dualityplot}.  The closed simplex in panel (a) represents the set  of multinomial distributions with bin  probabilities  $(\pi_0, \pi_1, \pi_2)$ where $\pi_i \ge 0$.

 In this example, the vector $b^T=(1,2,3)$  was chosen,  and  the parallel lines in panel (a) are  level sets of the mean of $b^TX$, where $X$ is the trinomial random variable. In the terminology of classical information geometry, these are $-1$-geodesics, and  it is immediate that they extend to the boundary in a very natural way.  These lines lie in the (tangent) direction $a$ which satisfies  
 $\sum_{k=0}^2 a_k =0$, and $ \sum_{k=0}^2 a_kb_k =0.$  These lines are also  shown in panel (b), but now in the $+1$ (or natural) parameterization and so are non-linear. Note that  the single line, labelled  by the mean value equalling $2$, corresponds to the $-1$ geodesic passing  through the vertex at $(1,0)$ in panel (a).

\begin{figure}[h] 
   \centering
   \includegraphics[width=3in]{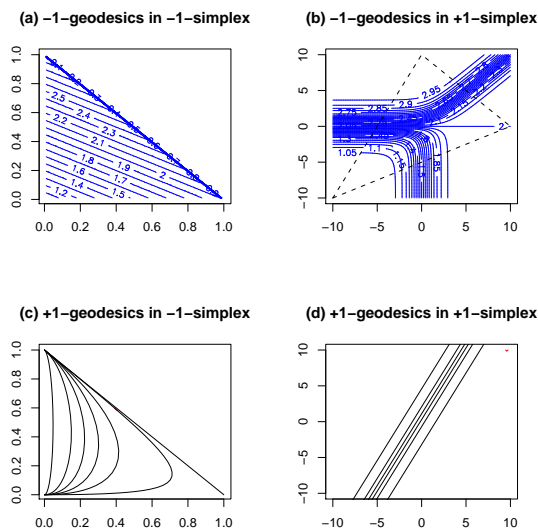} \\
   \caption{The information geometry of the extended trinomial model}
   \label{dualityplot}
\end{figure}

Panel (d)  shows the relative interior  of the extended trinomial in the natural affine parameterization. The straight  lines represent one dimensional full exponential families with  probabilities of the form 
$$
\left(  \frac{ \pi_0 \exp(\theta b_0)   }{ \sum_{k=0}^2 \pi_k \exp(\theta b_k)    }, \frac{ \pi_1 \exp(\theta b_1)   }{ \sum_{k=0}^2 \pi_k \exp(\theta b_k)    },\frac{ \pi_2 \exp(\theta b_2)   }{ \sum_{k=0}^2 \pi_k \exp(\theta b_k)    }  \right),
$$ each $\pi_k > 0$. These are $+1$ -geodesics in the direction $b$ through the base-point $(\pi_0,\pi_1, \pi_2)$ and, by the strict positivity of the exponential function, their image in panel (c)   lie strictly in the interior of the simplex.   It is a standard result that these $+1$ parallel lines are everywhere orthogonal, with respect to the  metric defined by the Fisher information matrix,  to the $-1$-parallel lines shown in panels (a) and (b). Each of these parallel lines can be found by moving  the base-point by 
$$ (\pi_0,\pi_1, \pi_2)    \mapsto (\pi_0,\pi_1, \pi_2)+  \sigma(a_0, a_1, a_2),  $$ $\sum_{k=0}^2 a_k=0$, 
where $\sigma$ is restricted so that all components remain non-negative,  \cite{Marr:on:2002}.

The key step in understanding the simplicial  nature of the $+1$-geometry is to see how the limits of the $+1$-parallel lines are connected to the boundary of the simplex.  This is made  clear  in panel (c),  where the $+1$-geodesics are plotted in the $-1$-affine parameters as curves. As $\sigma$ changes the limits of the curves clearly exist and lie on  the boundary of the simplex. The closure of the $+1$-representation  multinomial  is defined to make these continuous limits defined ``at infinity'' in the $+1$-parameters and is shown schematically as the dotted triangle in panel b. 

%
%
 
\end{example}

%
%


\subsection{Spectrum of Fisher Information}\label{Spectrum of Fisher Information}

The material  above looks explicitly at the $\pm 1$-affine geometries of \cite{Amar:1990} while this section concentrates on the third part of Amari's structure, i.e. the Fisher information or $0$-geometry. In any multinomial model,  the Fisher information matrix and its inverse are explicit. Indeed, the $0$-geodesics  and the corresponding geodesic distance are also explicit, see 
\cite{Amar:1990} or \cite{Kass:Vos:1997}. However, since   the simplex glues together  multinomial structures with different supports,  and the computational theory is in high dimensions, it is a fact that the Fisher information matrix can be arbitrarily  close to being singular. It is therefore of central interest that the spectral decomposition of the Fisher information itself has a very nice structure, as shown in this section.

\begin{example}\label{discretised normal}

Consider a multinomial distribution based on $81$ categories of equal width on $[-5, 5 ]$, where the probability associated to a  bin is proportional to that of the standard normal distribution for that bin. The Fisher information for this model is an $80 \times 80$ matrix whose spectrum is shown in Fig.~\ref{fisherspectrum}. By inspection it can be seen that there are exponentially small eigenvalues, so that while the matrix is positive definite it is also arbitrarily close to being singular. Furthermore, it can be seen that the spectrum has the shape of a half-normal density function and that the eigenvalues seem to come in pairs. These facts are direct consequences of the following results.

\begin{figure}[htbp] 
   \centering
   \includegraphics[width=2in]{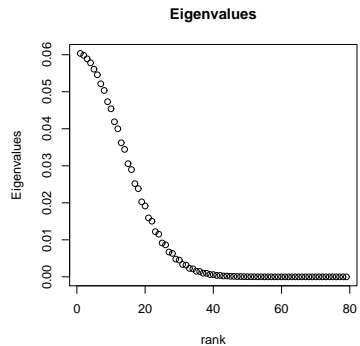}
   \caption{Spectrum of the  Fisher information matrix of  a discretised normal distribution}
   \label{fisherspectrum}
\end{figure}
\end{example}

With $\pi_{(0)}$  denoting the vector of all bin probabilities except $\pi_0$, the Fisher information matrix for the $+1$ parameters, written as a function of the probabilities,  is the sample size times 
$$I(\pi):= diag(\pi_{(0)}) - \pi_{(0)} \pi_{(0)}^T,$$ whose explicit spectral decomposition given, in all cases, in   Appendix 1, is an example of  interlacing eigenvalue results, (see for example  \cite{Horn:John:1985}, Chapter 4).     In particular, suppose $\{\pi_{i}\}_{i=1}^{k}$ comprises $g>1$
distinct values $\lambda_{1}> \cdots >\lambda_{g}>0$, $\lambda_{i}$ occuring
$m_{i}$ times, so that ${\textstyle\sum\nolimits_{i=1}^g} m_{i}=k$. Then, the spectrum of $I(\pi)$ comprises $g$ simple eigenvalues
$\{\widetilde{\lambda}_{i}\}_{i=1}^{g}$ satisfying
\begin{equation}\label{interleaving result}
\lambda_1> \tilde \lambda_1 > \dots > \lambda_g > \tilde \lambda_g \ge 0, 
\end{equation} together, if $g<k$, with $\{\lambda_{i}:m_{i}>1\}$, each such $\lambda_{i}%
$\ having multiplicity $m_{i}-1$. Further, $\widetilde{\lambda}_{g}%
>0\Leftrightarrow\pi_{0}>0$\ while each $\widetilde{\lambda}_{i}$ $(i<g)$\ is typically (much) closer
to $\lambda_{i}$ than to $\lambda_{i+1}$, making it a near replicate of
$\lambda_{i}$.

In this way, the Fisher spectrum mimics key features of the bin probabilities. Of central importance, one or more eigenvalues are exponentially small if and only if the same is true of the bin probabilities, the Fisher information matrix being singular if and only if one or more of the
$\left\{  \pi_{i}\right\}  _{i=0}^{k}$ vanishes. Again, typically, two or more eigenvalues will be close when two or more corresponding bin probabilities
are. We see this in Example \ref{discretised normal} where, by symmetry of the distribution, the bin probabilities are
paired, so that $m_{i}=2$. The (decreasingly) ordered plot of the eigenvalues, Figure \ref{fisherspectrum},  then resembles two copies of
the half-density formed by folding at the mode.  These dominant
features are robust to which bin we omit in forming $\pi_{(0)}$ and to
asymmetric placing of the bins. 

\subsection{Likelihood in the simplex}\label{Likelihood in simplex}

Potentially high dimensional simplicial structures being  the natural spaces in which to base computational information geometry, a primary question is to  look at the way that the likelihood, or log-likelihood, behaves in them. First note two important issues: in  typical  applications  the sample size will be much smaller than the dimension of the simplex, while   the simplex contains sub-simplexes    with varying  support. These two statements mean that our standard intuition about the shape of the log-likelihood function will not hold.   In particular, the standard  $\chi^2$-approximation  to the distribution of the deviance does not hold.

It will be convenient to call the face of the simplex spanned by the vertices (bins)  having strictly positive counts   the observed face, and the face spanned by the complement of this set  the unobserved face.  In the $-1$-representation, the log-likelihood is strictly concave on the observed face, strictly decreasing in the normal direction from it to the unobserved face and, otherwise, constant. This is illustrated  -- a schematic representation of the quadrinomial case when there are two zeros in the vector of counts -- in Figure~\ref{testxfig}, the $-1$-flat subspaces being formalised in Theorem \ref{shape of likelihood}.


\begin{figure}[htbp] 
   \centering
   \includegraphics[width=2in]{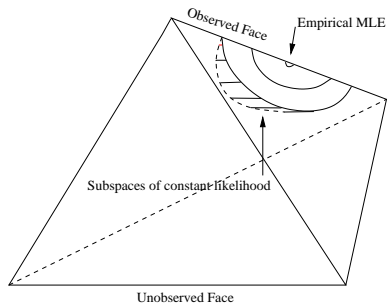}
   \caption{The shape of the likelihood in a simplex}
   \label{testxfig}
\end{figure}

The following theorem characterises the shape of the log-likelihood function in the $-1$-representation on the simplex. This function  is  concave, but not strictly concave,  so, the theorem characterises  where the lack of strict concavity comes from.  Being given by the function $\sum_{i\in\mathcal{P}} n_{i} \log\pi_{i}$, with the
constraints $\sum_{i \in\mathcal{P} \cup\mathcal{Z}} \pi_{i}=1$ and $\pi_{i}
\ge0$ it is immediate that the log-likelihood is constant on subsets defined by fixing $\pi_i \in \mathcal{P}$ and varying $\pi_i \in \mathcal{Z}$.    The decomposition presented in part (b) of the theorem shows that these subsets are, in fact, contained in -1-affine subspaces.

\begin{thm}\label{shape of likelihood} Let the observed counts be $\{n_i\}_{i=0}^K$ and define two subsets of the index set $\{0, \cdots, k\}$  by $\mathcal{P} =\{ i | n_i > 0 \}$ and $\mathcal{Z}= \{ i | n_i = 0 \}$. Let  $V_{\mathrm{mix}} =  \{ (v_0, \dots, v_k) | \sum v_{i} =0 \}$, and 
further define the  set $V^{0} \subset V_{\mathrm{mix}}$ by
$
\{ v \in V_{\mathrm{mix}} | v_{i}=0 \; \forall i \in P \}
$.

(a) The set  $V^{0}$ is a linear subspace of $V_{\mathrm{mix}}$. The log-likelihood is
constant on $-1$ affine subspaces of the form $\pi+ V^{0}.$

 (b)  Select $k^{*}
\in\mathcal{Z}$ and  consider the vector subspace of $V_{\mathrm{mix}}$ defined by
\[
V^{k^{*}}:= \left\{  v \in V_{\mathrm{mix}} | v_{i} =0 \mathrm{\; if\;} i
\in\mathcal{Z} \backslash\{ k^{*}\} \right\}  .
\]
 Then ${V_{mix}}$ can be decomposed as a direct sum of vector spaces
$
V_{\mathrm{mix}} =V^{0} \oplus V^{k^{*}}$.

\end{thm}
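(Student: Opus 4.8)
The plan is to treat both parts as elementary linear algebra on the tangent space $V_{\mathrm{mix}} \subset \rel^{k+1}$, the only care needed being the bookkeeping of the single constraint $\sum_i v_i = 0$.

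For part (a), I would first note that $V^0$ is the solution set, inside $V_{\mathrm{mix}}$, of the additional linear conditions $v_i = 0$ for $i \in \mathcal{P}$; being an intersection of kernels of linear functionals it is automatically a linear subspace, of dimension $|\mathcal{Z}| - 1$ (the $|\mathcal{Z}|$ coordinates free on $\mathcal{Z}$, constrained to sum to zero). For the constancy claim, the key observation is that the log-likelihood $\sum_{i \in \mathcal{P}} n_i \log \pi_i$ depends on $\pi$ only through the coordinates indexed by $\mathcal{P}$, since the terms with $i \in \mathcal{Z}$ contribute nothing ($n_i = 0$ there). Moving $\pi$ to $\pi + v$ with $v \in V^0$ leaves every $\pi_i$, $i \in \mathcal{P}$, unchanged, so the log-likelihood is literally unaltered, giving constancy on $\pi + V^0$.

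For part (b) I would establish the direct-sum decomposition by a dimension count together with a triviality-of-intersection argument. The relevant dimensions are $\dim V_{\mathrm{mix}} = k$, $\dim V^0 = |\mathcal{Z}| - 1$, and $\dim V^{k^*} = |\mathcal{P}|$ (vectors supported on $\mathcal{P} \cup \{k^*\}$ and summing to zero); since $|\mathcal{P}| + |\mathcal{Z}| = k+1$, these add to $k = \dim V_{\mathrm{mix}}$. It then suffices to check $V^0 \cap V^{k^*} = \{0\}$: any $v$ in the intersection vanishes on $\mathcal{P}$ (membership in $V^0$) and on $\mathcal{Z} \setminus \{k^*\}$ (membership in $V^{k^*}$), hence on every index except $k^*$, whereupon $\sum_i v_i = 0$ forces $v_{k^*} = 0$ as well. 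Since the intersection is trivial and the dimensions are complementary, the sum is direct and exhausts $V_{\mathrm{mix}}$.

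Alternatively, and more constructively, I could exhibit the decomposition of an arbitrary $v \in V_{\mathrm{mix}}$ directly: put $w_i = v_i$, $u_i = 0$ on $\mathcal{P}$; put $u_i = v_i$, $w_i = 0$ on $\mathcal{Z} \setminus \{k^*\}$; and split the shared coordinate by $u_{k^*} = -\sum_{i \in \mathcal{Z} \setminus \{k^*\}} v_i$ and $w_{k^*} = \sum_{i \in \mathcal{Z}} v_i$. One then verifies $\sum u_i = 0$ and $\sum w_i = 0$ using $\sum_i v_i = 0$, so that $u \in V^0$ and $w \in V^{k^*}$, with uniqueness forced by the support constraints. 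There is no deep obstacle here; the one genuinely delicate step is precisely this allocation at the shared index $k^*$, since a naive split by support alone would leave each summand failing the sum-to-zero requirement, and it is the role of $k^*$ to absorb the discrepancy so that both components land inside $V_{\mathrm{mix}}$.
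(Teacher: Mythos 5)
Your proposal is correct and, in its constructive version, is essentially identical to the paper's proof: the allocation $u_{k^*}=-\sum_{i\in\mathcal{Z}\setminus\{k^*\}}v_i$ and $w_{k^*}=\sum_{i\in\mathcal{Z}}v_i$ is exactly the $x,y$ decomposition given in the appendix, and your spelled-out argument for (a) is what the paper dismisses as ``immediate.'' The preliminary dimension-count-plus-trivial-intersection argument is a valid alternative route to the direct sum, but since you also exhibit the explicit decomposition there is no substantive difference from the paper.
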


\begin{proof}
See Appendix.
\end{proof}

\section{Closure of exponential families}\label{Duality and closure}

This section shows  how the closure of  exponential families plays a role in the computational  geometry.  In \S \ref{finite duality}  the discussion of   \S   \ref{Finite dimensional case} is formalised and connected to the information geometric concept of duality.  Furthermore, in  \S  \ref{Computing limits in exponential families} Example  \ref{logistic regression}  is used to  illustrate  the fact that the way that the boundaries of the high dimensional simplex are attached to the model is of great importance for the behaviour of the likelihood and the  for distribution of important inferential statistics.  
 
\subsection{Duality}\label{finite duality}

One of the key aspects of information geometry is the relationship between the $+1$, $-1$ and Fisher metric or  $0$-geometric structures via the concept called duality. Following \cite{Amar:1990}  when the underlying geometric object is a manifold  the relationship between the $+1$ and $-1$ connections, denoted by $\nabla^{+1}$ and $\nabla^{-1},$ and the Fisher information is captured in the duality relationship which can be written in terms of the inner product at $\theta$, $\left<  , \right>_\theta$, and any vector fields $X,Y,Z$ via the equation
\begin{equation}\label{duality}
X\left< Y, Z\right>=   \left< \nabla^{+1}_XY, Z\right>+ \left< Y, \nabla^{-1}_XZ\right>.
\end{equation}
One consequence of this  relationship is the existence on exponential families of a so-called mixed parameterization of the form $(\theta, \mu)$, where $\theta$ is $+1$-affine and $\mu$ is $-1$-affine, their level sets being  Fisher orthogonal across the manifold: see  \cite{Barn:Blae:1983}.

%

The following definition gives a useful computational tool for understanding  the limiting behaviour of exponential subfamilies in $\Delta^k$, and gives a generalisation of the trinomial model   shown in Fig.~\ref{dualityplot}.

\begin{definition}\label{simplicialparameterdefinition}
Let $\pi^0=\left( \pi^0_0, \dots, \pi^0_k  \right)$ be a probability vector, $a_1, \dots, a_d$ be a set of vectors in ${\mathbb R}^{k+1}$, 
such that
$$
1_{k+1}, a_1, \dots, a_d 
$$  are linearly independent, and $b_1, \dots, b_{k-d}$ be  a set of linearly independent vectors in $\vm$ such that $a_i^T b_j=0$ for $i=1, \dots, d$ and $j=1, \dots, k-d$.  Furthermore, define 
$$\bar{ P}_{\pi^0}  := \left\{ (\lambda, \sigma) : \left(p_{\pi^0}(\lambda, \sigma)\right)_h \ge 0 {\rm \;for \;all\;} h=0, \dots, k \right\},$$ in which  $\lambda \in {\mathbb R}^d$, $\sigma \in {\mathbb R}^{k-d}$ and
\begin{equation}\label{simplicialparameterfunction}
 \left(p_{\pi^0}(\lambda, \sigma)\right)_h :=   \frac{ \left(\pi_h^0+ \sum_{j=1}^{k-d} (\sigma_j b_j)_h\right) \exp\{ \sum_{i=1}^d (\lambda_ia_i)_h\}    }{ \sum_{h^*=0}^k \left\{ \left(\pi_{h^*}^0+ \sum_{j=1}^{k-d} (\sigma_j b_j)_{h^*}  \right) \exp\{ \sum_{i=1}^d (\lambda_ia_i)_{h^*}\}  \right\}  },  \end{equation} where 
\begin{equation}\label{positivity of sigma}
\left(\pi_h^0+ \sum_{j=1}^{k-d} (\sigma_j b_j)_h\right) \ge 0.
\end{equation}

\end{definition}

Note that for fixed $\sigma=\sigma^0$ the image of $p_{\pi^0}(\cdot, \sigma^0)$ is a $d$-dimensional exponential family. As $\sigma^0$ changes  these exponential families are $+1$-parallel. However for fixed $\lambda=\lambda^0$, the image of $p_{\pi^0}(\lambda^0, \cdot)$ is not in general $-1$-affine, but is for the special case when $\lambda^0=0$. Thus this construction, while having the advantage of being explicit, is not as strong as a true mixed parameterisation. 
However, the function defined in Definition \ref{simplicialparameterfunction} is  a useful tool in understanding the limiting properties of exponential families within the extended multinomial model. Consider    the set of possible values of $\sigma$. By condition ($\ref{positivity of sigma}$) it follows  that the domain of $\sigma$ -- for given $\pi^0$ -- is  a polytope. As  $\sigma$ converges to the boundary of this polytope the corresponding exponential family converges to an extended exponential family defined on the boundary of $\Delta^k$ determined by the corresponding zeros in the probability vector. This construction generalises the plots in Fig.~\ref
{dualityplot} (c) and (d). Notice also that  it allows the definition of the limits of families which complements the pointwise limits defined in  \cite{Brow:1986} and \cite{Csis:Matu:2005}.

%

\subsection{Computing limits in exponential families}\label{Computing limits in exponential families}

 \begin{figure}[htbp] 
   \includegraphics[width=2in]{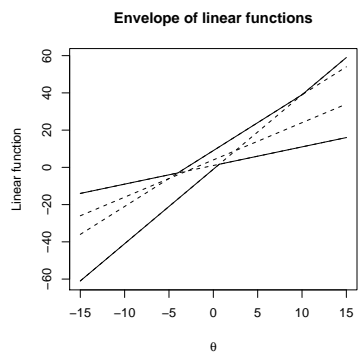}
    \caption{The envelope of a set of linear functions. Functions: dashed lines, envelope: solid lines}
    \label{simplexenvelope.eps}
 \end{figure}
\begin{example}\label{Simple limit example}
In order to visualise the geometric s of the problem of computing limits in exponential families consider a  low dimensional example.  Define   a two dimensional full exponential  family  by the  vectors $v_1= (1,2,3,4), v_2=(1,4,9,-1)$ and the uniform distribution base point, embedded in the three dimensional simplex.  The $2$-dimensional family is defined by the $+1$-affine space  through $(0.25, 0.25,0.25,0.25)$ spanned by the space of vectors of the form
$$\alpha (1,2,3,4) + \beta (1,4,9,-1)= (\alpha+\beta,  2\alpha + 4\beta, 3\alpha+ 9\beta, 4\alpha -\beta)$$
Consider directions from the origin  found by writing  $\alpha=\theta \beta$ giving, for each $\theta$,  a   one dimensional full exponential family parameterized by $\beta$ in the direction 
$\beta(\theta +1, 2\theta+4, 3\theta+9, 4\theta- 1  )$.  The aspect of this vector which determines the connection  to the boundary  is the rank structure of its elements. For example, suppose the first component was the maximum and  the last the minimum, then as $\beta \rightarrow \pm \infty$ this one dimensional family will be connected to the first and fourth vertex of the embedding four simplex, respectively. Note that     changing the value of $\theta$  changes the rank structure, as illustrated in Fig.~\ref{simplexenvelope.eps}. In this plot,  the four linear functions of $\theta$ are plotted (dashed lines) and the the impact of rank structure is determined by the upper and lower envelopes (solid lines). From this analysis of the envelopes of a set of linear functions it can be seen that the function $2\theta +4$ is redundant. The consequence of this is shown in Fig.~\ref {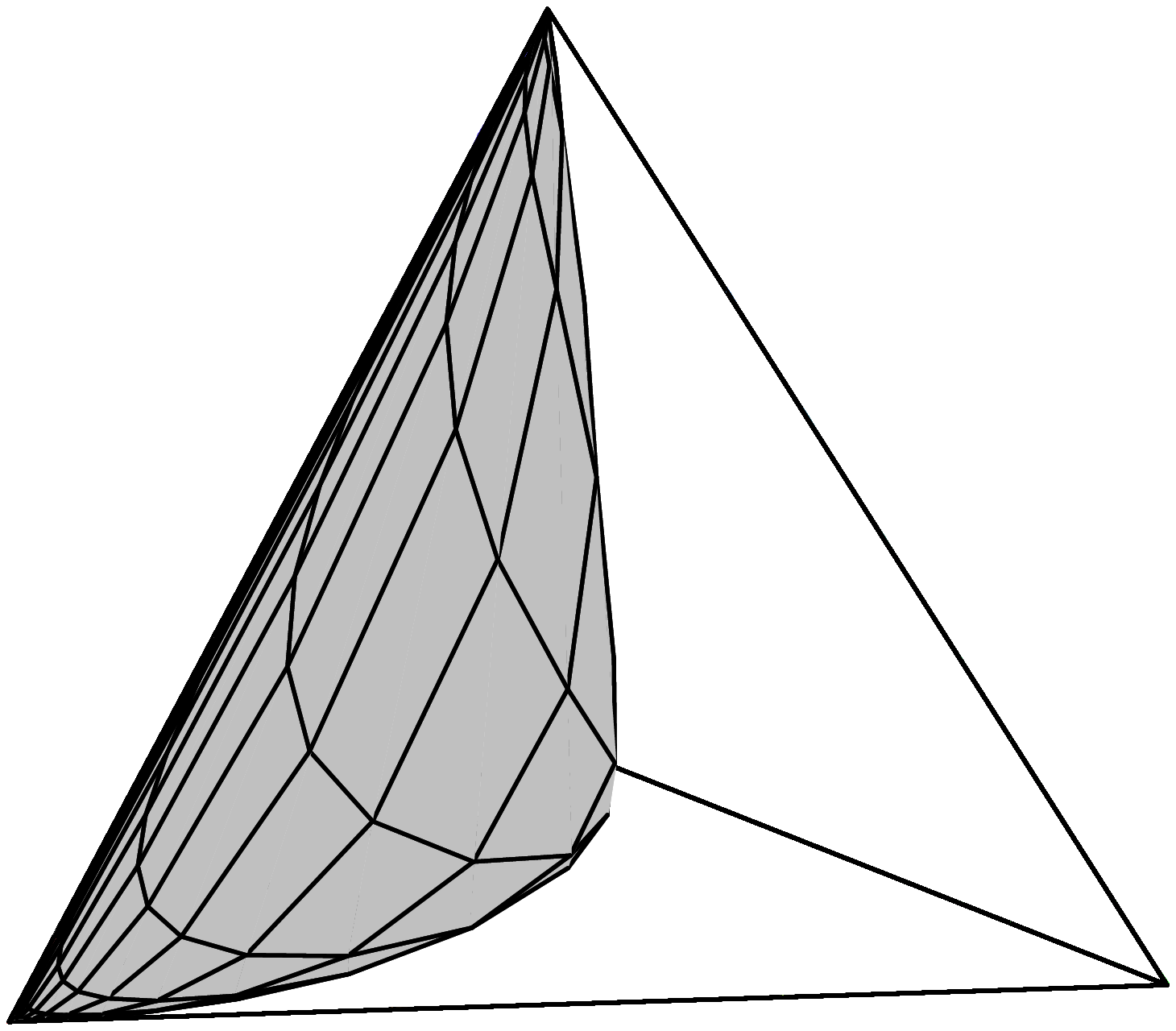} which shows the result of  direct computation in the two dimensional family. It is clear that, indeed, only three of the four vertexes of the ambient 4-simplex have been connected by the model.
\end{example}

In general,  the problem of finding the limit points in full exponential families inside simplex models is a problem of finding redundant linear constraints. As shown in 
\cite{Edels:Algorthms:1987},  this can be converted, via duality, into the problem of finding extremal points in a finite dimensional affine space. 
%
  \begin{figure}[htbp] 

   \includegraphics[width=2in]{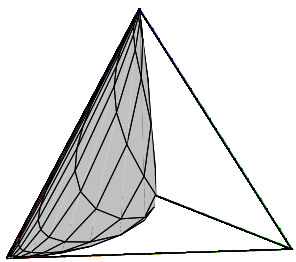}
    \caption{Attaching a two dimensional example to the boundary of the simplex}
    \label{boundaryexample.eps}
 \end{figure}

 \setcounter{foo}{\value{example}}

\setcounter {example} {2}
\begin{example}[continued]
Consider an $N \times D$ design matrix $X$ with $N$ samples and a binary response $t \in \{0, 1\}^N$. Let 
$s(x) = \log\left( \frac{x}{ 1- x} \right)$ so that  $s^{-1}(x) = \frac{\exp(x)}{1+\exp(x)}$, the  logistic regression  model being  given by 
$$P(T_i=1)= s^{-1}(\beta^T X^T_{i, })$$ where $X_{i, }$ is the $i^{th}$ row of $X$.
This is a full exponential family that   lies in the $2^N-1$ simplex when considered a model for the joint distribution of the $N$ binary response variates. A design matrix  $X$ defines  a $D$-dimensional $+1$-affine subset and changing the explanatory variates  changes the direction of this low dimensional space inside the space of joint distributions.

Consider response data (0, 1, 0, 1, 0, 1, 1), the explanatory variables being $x_0= (1,1,1,1,1,1,1)$ and $x_1=(1,2,3,4,5,6,7)$.  For convenience, in the space of all joint distributions,  label the bin associated with the sequence $\{t_i\}_{i=0}^{N-1}$ with the binary number which that sequence represents
\begin{equation}\label{binary representation}
\sum_{i=0}^{N-1} 2^i t_i.
\end{equation} This  logistic model is a two-dimensional exponential family which passes through the  point  corresponding to the uniform distribution of the $2^N$ simplex and  lies in the directions defined by 
$$v_0:= \left(\sum_{i=0}^{N-1}  t_{ij} x_{0 i}\right)_{j=1}^{2^7} , v_1:= \left( \sum_{i=0}^{N-1}  t_{ij} x_{1 i}\right)_{j=1}^{2^7} ,$$ where $t_{ij}$ is the binary representation of vertex $j$.  
\begin{figure}[htbp] 
   \centering
   \includegraphics[width=2in]{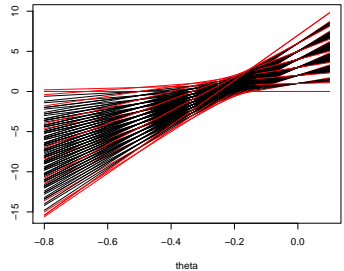} 
   \caption{Envelopes of lines  }
   \label{logisticfigenvelope.pdf}
\end{figure}

As in Example \ref{Simple limit example}  consider the way that this  two-dimensional exponential family is attached to the boundary using the envelope method.  There are $2^n$ possible lines to consider and these are shown in Fig.~\ref{logisticfigenvelope.pdf}. These lines whose duals are extremal points are plotted in red and it can clearly be seen that the upper and lower envelopes have been found.  The corresponding vertices which the full exponential family reaches are given by  vectors of the form $z$ with the structure either  $z_i=0$ $i=1, \dots, h$ and $1$ for $i=h+1, \dots, N$ or $z_i=1$ $i=1, \dots, h$ and $0$ for $i=h+1, \dots, N$.

%
We can see how this global geometry affects the inference. One immediate issue is that if the observed data is a sequence which is one of the vertices listed above then the corresponding MLE will also lie on the boundary. Thus, for example, if the  observed data is $(0,1, 0,1,0, 1,1)$ there is a `regular' turning point in $\beta$-space. However if, instead,  the data is  $( 1, 1, 0, 0, 0, 0 ,0)$ the  MLE does indeed go to infinity and has its maximum at the correct vertex.
This result for $N=7$ in fact generalizes, when the explanatory variable is linear, to any $N$. The corresponding vertices which the full exponential family reaches are  again given by vectors of the form $z$ with one of the two structures identified above.

%
%

\end{example}

\setcounter{example}{\value{foo}}

\section{The tools of information geometry}\label{Classical information geometry}

 In general, working in a simplex, boundary effects mean that standard first order asymptotic results can fail. Most standard methods are not uniform across the simplex. Therefore one way that the higher order asymptotic methods of information geometry have value is that they can be  used to validate the region of parameter space where the first order method will be accurate. Example   \ref{censored exponential  example}  has a continuous random variable with compact support and it is used to show how discretisation can be  used to  apply computational information geometry to such models.

\subsection{Higher order asymptotics: Edgeworth expansions}\label{Higher order asymptotics: Edgeworth expansions}

 One very powerful set of results from classical information geometry derives from the fact that geometrically based tensor analysis is well-suited for use in multi-dimensional higher order asymptotic analysis, see \cite{Barn:Cox:1989} or \cite{McCu:1987}. However, using this tensorial formulation is not without difficulty for the mainstream statistician. Its very efficient, tight notation may perhaps obscure rather than enlighten, while the resulting formulae can typically have a very large number of terms, making them rather cumbersome to work with explicitly. These obstacles to implementation are overcome by the computational approach described in this paper. The clarity of the tensorial approach is ideal for coding, while  large numbers of additive terms, of course, are easy to deal with.  Two more fundamental  issues, which the global geometric approach of this paper highlight,  concern numerical stability.  The ability to invert  the Fisher information matrix is vital in most tensorial formulae and so understanding its  spectrum, as discussed in Section \ref{Spectrum of Fisher Information}, is vital. Secondly numerical under and overflow near boundaries  requires careful analysis and so understanding the way that models are attached to the boundaries of the extended multinomial models is equally important. 
 
An important aspect  of  higher order methods is not just their accuracy in a given example, but the way that they can be used to validate first order methods. In cases like logistic regression  first order methods are typically used for inference despite the fact that they are not uniformly accurate across the parameter space of interest. In the example below the fact that the Edgeworth expansion is far from normal acts as a diagnostic for the first order methods.

\setcounter{foo}{\value{example}} 

\setcounter {example} {2}
\begin{example}[continued]
Consider Fig.~\ref{edgeworthtwodim} where the parameters of a two dimensional logistic
family are such that the sampling distribution of the sufficient
statistics is considerably far from normal. This is shown by the simulated
sample of black points, the red contours, computed numerically from the Edgeworth expansion,  showing a good fit with the simulation,
but a distribution which is far from the first order approximation. As holds widely, in this example, the Edgeworth expansion is easy to compute numerically.

%
%

\begin{figure}[h] 
   \centering
   
   \includegraphics[width=3in]{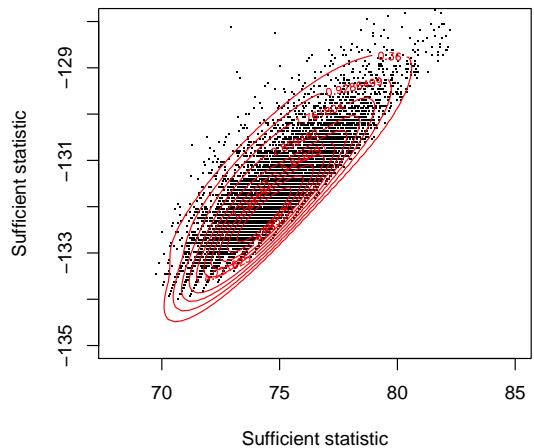}
   \caption{Using the Edgeworth expansion near the boundary of Example 3}
   \label{edgeworthtwodim}
\end{figure}

\end{example}
\setcounter{example}{\value{foo}}

\subsection{Continuity and compactness}\label{Continuous and compact}

In order to use the high dimensional simplex models with continuous random variables it  is necessary  to truncate and discretise the sample space into a finite number of bins. The following theorems show that the information loss in doing this is arbitrarily  small for a fine enough discretisation and that the key to understanding the information in general is controlling the conditional moments in each bin  of the random variables of interest, uniformly in the parameters of the model.

\begin{thm}\label{general information loss}
Let $f(x;\theta)$, $ \theta \in \Theta$, be a parametric family of density functions  with  common support ${\cal  X} \subset {\bf R}^d$ each being continuously differentiable on the relative interior of ${\cal  X}$, assumed non-empty.  Further, let ${\cal X}$ be compact,while  $$\left\{ \left\| \frac{\partial}{\partial x} f(x;\theta) \right\| | x \in {\cal X} \right\}$$ is uniformly bounded in $\theta \in \Theta$ by $M$, say.

Then for any $\epsilon >0$ and for any sample size $N>0$, there exists a finite, measurable partition $\left\{ B_k \right\}_{k=0}^{K(\epsilon, N)}$ of ${\cal X}$ such that:  for all $(x_1, \dots, x_N) \in {\cal X}^N$, and for all $(\theta_0, \theta) \in \Theta^2$
\begin{equation}\label{general likelihood bound}
\left| \log\left\{ \frac{ Lik_d(\theta) }{  Lik_d(\theta_0) }  \right\}  -   \log\left\{  \frac{ Lik_c(\theta) }{  Lik_c(\theta_0) }   \  \right\}      \right| \le \epsilon,
\end{equation}  where $Lik_d$ and $Lik_c$ are the likelihood functions from the discretised and continuous distributions respectively.
\end{thm}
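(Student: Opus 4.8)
The plan is to reduce the uniform bound (\ref{general likelihood bound}) to a single per-observation, per-bin estimate and then to force that estimate below $\epsilon/N$ by shrinking the bins. Writing the likelihoods as products, $Lik_c(\theta)=\prod_{i=1}^N f(x_i;\theta)$ and $Lik_d(\theta)=\prod_{i=1}^N p_{k(i)}(\theta)$ with $p_k(\theta):=\int_{B_k} f(y;\theta)\,dy$ and $k(i)$ the index of the bin containing $x_i$, both log-likelihood ratios become sums over the $N$ observations. By the triangle inequality it then suffices to guarantee, for every bin $B_k$, every $x\in B_k$ and every pair $(\theta,\theta_0)$, that $\bigl|\log\{p_k(\theta)/p_k(\theta_0)\}-\log\{f(x;\theta)/f(x;\theta_0)\}\bigr|\le \epsilon/N$; summing the $N$ contributions then yields (\ref{general likelihood bound}). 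This is where the stated dependence of the partition on both $\epsilon$ and $N$ enters, and the log-ratios are well defined because the common-support hypothesis keeps the densities strictly positive.

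First I would strip off the bin volume. Since each $f(\cdot;\theta)$ is continuous on the (connected) bin $\overline{B_k}$, the mean value theorem for integrals gives $p_k(\theta)=|B_k|\,f(\xi_k(\theta);\theta)$ for some $\xi_k(\theta)\in\overline{B_k}$, so the factor $\log|B_k|$ cancels in each ratio and the quantity to be bounded becomes $\bigl|\log\{f(\xi_k(\theta);\theta)/f(x;\theta)\}\bigr|+\bigl|\log\{f(\xi_k(\theta_0);\theta_0)/f(x;\theta_0)\}\bigr|$. Each summand now compares one density, at a single parameter value, evaluated at the two points $\xi_k(\theta)$ and $x$ of $B_k$. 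The hypothesis that $\|\partial f/\partial x\|$ is bounded by $M$ \emph{uniformly in} $\theta$ makes every $f(\cdot;\theta)$ globally $M$-Lipschitz with the \emph{same} constant, so $|f(\xi_k(\theta);\theta)-f(x;\theta)|\le M\|\xi_k(\theta)-x\|\le M\,\mathrm{diam}(B_k)$ for all $\theta$ at once. This uniformity over $\Theta$ is precisely what allows one partition to serve every parameter value.

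The main obstacle is turning this additive oscillation bound into a multiplicative, log-ratio bound, which needs control of the density \emph{from below}. Where $f(\cdot;\theta)$ admits a uniform lower bound $c>0$ on $B_k$ one has $\bigl|\log\{f(\xi_k(\theta);\theta)/f(x;\theta)\}\bigr|\le M\,\mathrm{diam}(B_k)/c$, which is driven below $\epsilon/(2N)$ by choosing $\mathrm{diam}(B_k)$ of order $c\epsilon/(MN)$, and compactness of $\mathcal{X}$ then lets finitely many such bins cover it, fixing $K(\epsilon,N)<\infty$. The genuine difficulty is that the density may degenerate towards zero --- for instance near the boundary of $\mathcal{X}$ --- and that the region where it is small can depend on $\theta$, so a single $c$ valid for all $\theta$ is not automatic; extracting it is exactly the requirement, flagged before the statement, of controlling the within-bin behaviour \emph{uniformly in the parameters}. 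I would obtain it from the common-support hypothesis (each $f(\cdot;\theta)$ being continuous and strictly positive on the compact set $\mathcal{X}$, hence attaining a positive minimum) reinforced, on any small-density collar, by the normalisation $\int_{\mathcal{X}} f(\cdot;\theta)=1$ together with the Lipschitz bound, which prevent $f$ from being simultaneously small and spread out; this collar analysis is where I expect the real work to lie. Summing the resulting $N$ per-observation estimates then delivers (\ref{general likelihood bound}).
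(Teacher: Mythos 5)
Your proposal follows essentially the same route as the paper's proof: strip the bin volume out of $\int_{B_k}f$ (you via the mean value theorem for integrals, the paper via a first-order Taylor expansion with remainder bounded by $M\,\mathrm{diam}(B_k)\,|B_k|$), let the volumes cancel in the likelihood ratios, and control the remaining within-bin oscillation uniformly in $\theta$ through the gradient bound $M$, summing over the $N$ observations. One remark is worth making about the step you flag as "where the real work lies": the paper does not do that work either --- it simply asserts "since $f(x_i;\theta)$ is bounded away from zero for all $\theta$" --- and your hope of deriving a $\theta$-uniform lower bound from the normalisation together with the Lipschitz bound cannot succeed. For example, on ${\cal X}=[0,1]$ the family $f(x;\theta)=\theta+2(1-\theta)x$, $\theta\in(0,1]$, has common support, integrates to one, and has $|\partial f/\partial x|\le 2$ uniformly, yet $f(0;\theta)=\theta\to 0$; taking $x_1=0$ and $\theta_0=1$ the continuous log-likelihood ratio diverges while the discretised one stays bounded, so the stated conclusion fails. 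The uniform positive lower bound on $\inf_{x,\theta}f(x;\theta)$ is therefore an unavoidable additional hypothesis rather than something extractable from a collar analysis; with it granted, your argument (and the paper's) goes through.
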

\begin{proof}  See Appendix.
\end{proof}

The following result looks at the case where the family that is discretised is itself an exponential family  and so the tools of classical information geometry can be applied. In general, after discretisation a full exponential  family does not remain full exponential and there is  information loss. However, the following results show that this loss can be made small enough to be unimportant for inference and that all information geometric results on the two families can be made  arbitrarily close. 
\begin{thm}
\label{exponential information loss theorem} 
Let $f(x;\theta)= \nu(x) \exp\left\{\theta^T s(x)- \psi(\theta)  \right\}$, $x \in {\cal X}, \theta \in \Theta$,  be 
an exponential family which satisfies the regularity conditions of \cite{Amar:1990}, p.~16. Further, assume that $s(x)$ is uniformly continuous and $s({\cal X})$ is compact. 

Then, for any $\epsilon > 0$, there exists a finite measurable partition $\{B_k\}_{k=0}^{K(\epsilon)}$ of ${\cal X}$ such that, for all choices of bin labels $s_k \in
s(B_k)$, all terms of Amari's information geometry  for $f(x; \theta)$ can be approximated to $O(\epsilon)$  by the corresponding terms for
the family
$$
\left\{ (\pi_k(\theta), s_k) |   \pi_k(\theta) = \int_{B_k} f(x;\theta) dx, s_k \in s(B_k) \right\}.
$$ In particular:
\begin{enumerate}
\item[(a)] For all $\theta$, and any norm,
$$\| \mu_d(\theta) - \mu_c(\theta)  \|= O(\epsilon)$$where $\mu_d(\theta)= \sum_{k=0}^{K(\epsilon)} s_k \pi_k(\theta)$  and $\mu_c(\theta)=  \int_{\cal X} x f(x;\theta) dx$.

\item[(b)] The expected Fisher information for $\theta$ of $f(x;\theta)$, $I_c(\theta)$, and the  expected Fisher information for $\{\pi_k(\theta)\}$, $I_d(\theta)$, satisfies 
$$\| I_d(\theta) - I_c(\theta)  \|_\infty  =O(\epsilon^2).$$

\item[(c)] The skewness tensors $T_c(\theta)$, see \cite{Amar:1990}, p.~105,  of $f(x;\theta)$  and $T_d(\theta)$ for $\{\pi_k(\theta)\}$ satisfy 
$$\| T_d(\theta) - T_c(\theta)  \|_\infty  =O(\epsilon^3).$$
\end{enumerate}

\end{thm}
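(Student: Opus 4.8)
The plan is to reduce the whole statement to the comparison of the first three cumulants of the sufficient statistic $s$. For an exponential family every object of Amari's geometry --- the $-1$ (expectation) coordinate, the Fisher metric, the skewness tensor and the higher $\alpha$-connections --- is a fixed algebraic function of the derivatives of $\psi(\theta)$, that is of the cumulants of $s(X)$ under $f(\cdot;\theta)$; the same holds for the discretised family, with $s(X)$ replaced by its binned version. So it is enough to bound, uniformly in $\theta$, the discrepancy between the continuous law of $s(X)$ and the discrete law assigning mass $\pi_k(\theta)=\int_{B_k}f(x;\theta)\,dx$ to each label $s_k$, at the level of first, second and third cumulants. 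First I would build the partition: uniform continuity of $s$ and compactness of $s(\mathcal X)$ give, for the prescribed $\epsilon$, a finite measurable partition $\{B_k\}$ with $\mathrm{diam}\,s(B_k)\le\epsilon$, hence the $\theta$-free pointwise bound $\|s(x)-s_k\|\le\epsilon$ for all $x\in B_k$ and all admissible labels $s_k\in s(B_k)$. Because this bound does not involve $\theta$, every estimate below is automatically uniform over $\Theta$.

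Part (a) is then direct: $\mu_c-\mu_d=\sum_k\int_{B_k}(s(x)-s_k)f(x;\theta)\,dx$, so $\|\mu_c-\mu_d\|\le\epsilon$. The extra power of $\epsilon$ in part (b) comes from a conditioning identity, not a cruder bound. The score of the discrete family is $\partial_\theta\log\pi_k=\bar s_k(\theta)-\mu_c(\theta)$ with $\bar s_k(\theta)=\mathbb{E}[s(X)\mid X\in B_k]$, so its Fisher information is the between-bin variance $I_d=\mathrm{Var}(\bar s_B)$; combined with the law of total variance $I_c=\mathbb{E}[\Sigma_B]+\mathrm{Var}(\bar s_B)$ this yields the exact identity $I_c-I_d=\mathbb{E}[\Sigma_B]$, where $\Sigma_B$ denotes the within-bin covariance. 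Each $\Sigma_B$ is the covariance of a variable of range $\le\epsilon$, whence $\|\Sigma_B\|\le\epsilon^2$ and $\|I_c-I_d\|_\infty=O(\epsilon^2)$. This rate is intrinsic to the discretised family and independent of the labels; using instead the naive label covariance $\sum_k\pi_k(s_k-\mu_d)^{\otimes2}$ would only give $O(\epsilon)$.

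Part (c) carries the main difficulty. The third-order analogue of the variance decomposition is
$$T_c-T_d=\mathbb{E}[\kappa_3(s\mid B)]+3\,\mathrm{sym}\,\mathbb{E}[\Sigma_B\otimes(\bar s_B-\mu_c)],$$
where $\kappa_3(\cdot\mid B)$ is the within-bin third central moment. The first term is harmless, being $O(\epsilon^3)$ since the conditional law of $s$ has range $\le\epsilon$. The obstacle is the cross term: with only $\|\Sigma_B\|=O(\epsilon^2)$ and $\|\bar s_B-\mu_c\|=O(1)$ it is a priori merely $O(\epsilon^2)$, one power short of the claim.

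The observation that saves the rate is that $\sum_k\pi_k(\bar s_k-\mu_c)=0$, so the cross term may be recentred as $3\,\mathrm{sym}\sum_k\pi_k(\Sigma_k-\bar\Sigma)\otimes(\bar s_k-\mu_c)$, with $\bar\Sigma=\mathbb{E}[\Sigma_B]$; it is thus controlled by the \emph{variation} of the within-bin covariances across bins rather than by their magnitude. The hard part will therefore be to choose the partition so that this variation is itself $O(\epsilon^3)$ uniformly in $\theta$ --- for example by taking cells of equal extent in $s$-space, on which the conditional law of $s$ is a perturbation of the uniform law with a common leading covariance, the discrepancies $\Sigma_k-\bar\Sigma$ being demoted to higher order by the assumed regularity of $f(\cdot;\theta)$. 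Granting such a partition, both terms are $O(\epsilon^3)$ and part (c) follows. Finally, the blanket $O(\epsilon)$ statement for the remaining terms of Amari's geometry is obtained by writing each as a smooth function of the cumulants just estimated --- the Fisher metric being invertible by the regularity conditions, so that inversion does not destroy the control --- any objects depending on still higher cumulants being handled by the same conditioning device carried to the appropriate order.
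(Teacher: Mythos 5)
Your treatment of parts (a) and (b) coincides with the paper's. The partition is built exactly as you describe, from uniform continuity of $s$ and compactness of $s(\mathcal X)$, so that $\sup_{x,y\in B_k}\|s(x)-s(y)\|\le\epsilon$ uniformly in $\theta$; part (a) is the same one-line estimate; and your law-of-total-variance identity $I_c-I_d=E[\Sigma_B]$ is precisely what the paper obtains by differentiating $\log\pi_k(\theta)$ twice, since $\partial_r\partial_s\log\pi_k=\mathrm{Cov}_\theta(s_r,s_s\mid B_k)-\partial_r\partial_s\psi$ and each within-bin covariance is $O(\epsilon^2)$ by the diameter bound.

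The divergence is in part (c), and it is where your proposal has a genuine gap. The paper never encounters your cross term: it differentiates $\log\pi_k(\theta)$ a third time, obtaining $\partial_r\partial_s\partial_t\log\pi_k=E_\theta(t_rt_st_t\mid B_k)-\partial_r\partial_s\partial_t\psi$ with $t=s-E_\theta(s\mid B_k)$, so the only discrepancy is the within-bin third central moment, which is $O(\epsilon^3)$ immediately from $\|t\|\le\epsilon$. That is, the skewness tensors are compared at the level of third log-derivatives (conditional third cumulants), where the between-bin/within-bin interaction you identify simply does not appear. Your route, via the third central moment of the score $E[(\partial\ell)^{\otimes 3}]$, produces the extra term $3\,\mathrm{sym}\,E[\Sigma_B\otimes(\bar s_B-\mu_c)]$, which under the stated hypotheses is genuinely only $O(\epsilon^2)$; and the repair you sketch is not available from those hypotheses, since forcing $\Sigma_k-\bar\Sigma=O(\epsilon^3)$ by cells of equal extent in $s$-space requires the pushforward density of $s(X)$ to be smooth and bounded away from zero (and, in several dimensions, cells of congruent shape), none of which follows from uniform continuity of $s$ and compactness of $s(\mathcal X)$. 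You should either adopt the paper's computation outright, or make explicit that for the curved discretised family the two standard expressions for the skewness tensor ($-E[\partial_r\partial_s\partial_t\ell]$ versus $E[\partial_r\ell\,\partial_s\ell\,\partial_t\ell]$) differ by exactly your cross term, so that the $O(\epsilon^3)$ rate is being asserted for the cumulant (log-derivative) version; to your credit, this is a subtlety the paper's own two-line proof passes over in silence.
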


\begin{proof}  See Appendix.
\end{proof}

The following Corollary states that the likelihood before and after discretisation  can also be made arbitrarily close with a fine enough discretisation, as illustrated in Fig.~\ref{example2fig2.pdf} drawn from Example 2, as described below.

\begin{corollary}\label{corollary for exponential family} Under the conditions of Theorem \ref{exponential information loss theorem}, let $\hat \theta_c$ denote the MLE based  on a sample, $x_1, \dots, x_N$,  from $f(x, \theta)$ and $\hat \theta_d$ the MLE for $\{\pi_k(\theta)\}$ based on the counts $n_k$, $k=0,  \dots, K(\epsilon) $ 
for the partition $\{B_k\}_{k=0}^{K(\epsilon)}$ of Theorem  \ref{exponential information loss theorem}.

Then \begin{equation}\label{difference in MLE} 
\| \hat \theta_d - \hat \theta_c \| = O(\epsilon)
\end{equation} 
and
 \begin{equation}\label{difference in information } 
\left|  \frac{ \partial^2 \ell_d }{ \partial \theta_r \partial \theta_s  }(\hat \theta_d)   -   \frac{ \partial^2 \ell_c }{ \partial \theta_r \partial \theta_s  }(\hat \theta_c)  \right| = O(\epsilon)
\end{equation} 

\end{corollary}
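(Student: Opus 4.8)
The plan is to characterise each estimate through its score equation and to transfer closeness of the estimating equations to closeness of the solutions by inverting the mean map $\mu_c$, whose Jacobian is the per-observation Fisher information $I_c$. Writing $\psi$ for the continuous log-partition function and, for each bin, $\psi_k(\theta):=\log\int_{B_k}\nu(x)\exp\{\theta^T s(x)\}\,dx$, so that $\log\pi_k(\theta)=\psi_k(\theta)-\psi(\theta)$, I would first record the exact identities $\nabla\ell_c\equiv N(\bar s_c-\mu_c)$, $\nabla^2\ell_c\equiv -N I_c$, together with $\nabla\ell_d(\theta)=\sum_k n_k\mu_c^{(k)}(\theta)-N\mu_c(\theta)$ and $\nabla^2\ell_d(\theta)=\sum_k n_k\Sigma_c^{(k)}(\theta)-N I_c(\theta)$, where $\mu_c^{(k)}=\nabla\psi_k$ and $\Sigma_c^{(k)}=\nabla^2\psi_k$ are the bin-conditional mean and covariance of $s$ and $\bar s_c:=N^{-1}\sum_i s(x_i)$. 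Because the partition of Theorem \ref{exponential information loss theorem} is built so that each $s(B_k)$ has diameter $O(\epsilon)$, uniform continuity of $s$ gives, uniformly in $\theta$, both $\mu_c^{(k)}(\theta)=s_k+O(\epsilon)$ and $\Sigma_c^{(k)}(\theta)=O(\epsilon^2)$; these are the per-bin refinements underlying parts (a) and (b) of that theorem.

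For \eqref{difference in MLE}, I would set $\nabla\ell_d(\hat\theta_d)=0$ in the identity above and substitute $\mu_c^{(k)}(\hat\theta_d)=s_k+O(\epsilon)$ to obtain $\mu_c(\hat\theta_d)=N^{-1}\sum_k n_k s_k+O(\epsilon)=\bar s_d+O(\epsilon)$; since each observation lies in its own bin, $\bar s_d=\bar s_c+O(\epsilon)$, so that $\mu_c(\hat\theta_d)-\mu_c(\hat\theta_c)=O(\epsilon)$. By Amari's regularity conditions $\mu_c$ is a diffeomorphism onto the interior of the convex hull of $s(\mathcal X)$ with Jacobian $I_c\succ0$; restricting to a compact neighbourhood of $\hat\theta_c$ on which $I_c^{-1}$ is bounded, the mean-value theorem then gives $\|\hat\theta_d-\hat\theta_c\|=O(\epsilon)$.

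For \eqref{difference in information }, I would evaluate the two Hessians at their respective estimates. The continuous Hessian is constant in the data, so $\partial^2_{rs}\ell_c(\hat\theta_c)=-N[I_c(\hat\theta_c)]_{rs}$; from $\nabla^2\ell_d(\theta)=\sum_k n_k\Sigma_c^{(k)}(\theta)-N I_c(\theta)$ and $\sum_k n_k\Sigma_c^{(k)}=O(N\epsilon^2)$ I would get $\partial^2_{rs}\ell_d(\hat\theta_d)=-N[I_c(\hat\theta_d)]_{rs}+O(N\epsilon^2)$. Lipschitz continuity of $I_c$ together with $\|\hat\theta_d-\hat\theta_c\|=O(\epsilon)$ from the first part yields $[I_c(\hat\theta_d)]_{rs}=[I_c(\hat\theta_c)]_{rs}+O(\epsilon)$, whence the difference of the two second derivatives is $O(N\epsilon)$, i.e. $O(\epsilon)$ for the fixed sample size $N$.

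The main obstacle will be uniformity. All the bin-conditional bounds must hold simultaneously for every $\theta$ in a region guaranteed to contain both estimates, and $I_c^{-1}$ must be uniformly bounded there so that the inversion and the Lipschitz step are legitimate. This is precisely what the hypotheses of Theorem \ref{exponential information loss theorem} — uniform continuity of $s$, compactness of $s(\mathcal X)$, and Amari's regularity — are designed to supply; the care lies in confining both estimates to one compact set, using the (generic) assumption that $\bar s_c$, and hence $\bar s_d$, lies in the interior of the convex hull of $s(\mathcal X)$, which also guarantees that both maximum likelihood estimates exist and are unique for all small enough $\epsilon$.
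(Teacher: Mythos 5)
Your proposal is correct and follows essentially the same route as the paper's (much terser) proof: both characterise the two estimates via their score equations using the identity $\partial_r\log\pi_k(\theta)=E_\theta(s_r(X)\mid X\in B_k)-\partial_r\psi(\theta)$, deduce that the right-hand sides of the two score equations differ by $O(\epsilon)$ from the bin-conditional mean bound, invert $\nabla\psi=\mu_c$ to obtain \eqref{difference in MLE}, and then combine \eqref{difference in MLE} with the second-derivative identity and the $O(\epsilon^2)$ bound on the bin-conditional covariances to obtain \eqref{difference in information }. Your version simply makes explicit the details the paper leaves implicit (boundedness of $I_c^{-1}$ on a compact neighbourhood, and the replacement of the bin label $s_k$ by the conditional mean), which is entirely consistent with the paper's argument.
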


\begin{proof} See Appendix.\end{proof}

The following example illustrates these results and also shows an application  of  dimension reduction based on information geometry.  Dimension reduction  is dependent on the choice of affine structure. The reduction here is done in the $+1$-affine geometry,  unlike the mixture geometry examples, \ref{binomial example}  and \ref{tripod example},  where it is done in the $-1$-geometry.

\setcounter{foo}{\value{example}}

\setcounter {example} {2}
\addtocounter {example} {-1}
\begin{example}[continued]
This example shows  how results from information geometry can be  numerically implemented in the 
resultant curved exponential family.  An example   in \cite{Hand:Daly:Lunn:1994} concerns survival times $Z$ for  leukaemia patients measured in days from the time of diagnosis. Originally from \cite{Brys:Sidd:1969}, there are  43 observations. For  illustrative purposes the data is censored at a fixed value such that the censored exponential distribution  gives a reasonable, but not perfect, fit.  It is  assumed  the random variable $Z$ has an exponential distribution but only  $Y= \min\{ Z, t\}$ is observed.  As discussed in \cite{Marr:West:On:2002} this gives a one-dimensional curved exponential family inside a two dimensional regular exponential family of the form 
\begin{equation}\label{eq1}
\exp\left[\lambda^1 x +\lambda^2 y -
\log\left\{
\frac{1}{\lambda^2}\left(e^{\lambda^2 t}
-1 \right)+ e^{\lambda^1 +\lambda^2 t} \right\} \right] ,
\end{equation}
where $y=\min(z, t)$ and $x=I(z\ge t)$ and the embedding map is given by 
$(\lambda^1(\theta), \lambda^2(\theta))=(-\log \theta, -\theta)$.

  \begin{figure}[htbp] 
    \centering
   \includegraphics[width=3in]{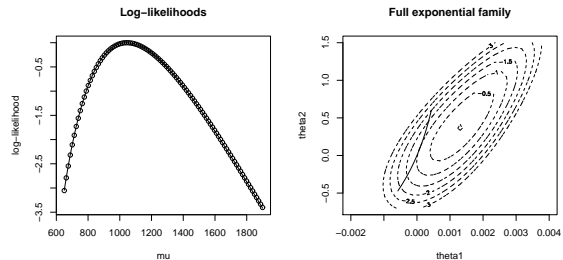}
    \caption{Computational information geometry: likelihood approximation and dimension reduction}
    \label{example2fig2.pdf}
 \end{figure}

   \begin{figure}[htbp] 
    \centering
   \includegraphics[width=3in]{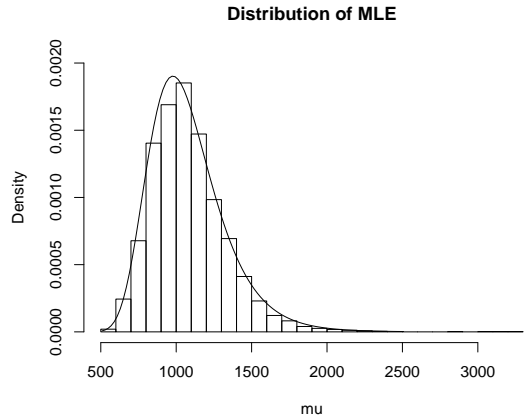}
    \caption{Sampling distribution of $\widehat \mu$ for censored exponential based on saddlepoint approximation }
    \label{example2fig4.pdf}
\end{figure}
 
 Figure \ref{example2fig2.pdf} shows some of the details of the geometry of the curved exponential family which is created after censoring. The censoring value was chosen at 750. The parameter of interest is $\mu$, the mean of the uncensored observations.  In the left hand panel of Fig.~\ref{example2fig2.pdf}, the solid line is the likelihood function based on binning the data to bins of width four days and using a multinomial approximation.  The dots in this panel are the log-likelihood for the raw data based on the continuous censored exponential model. As can be clearly seen there is no real inferential loss in the binning and discretisation process.  The likelihood plot also shows appreciable skewness,  which suggests that standard first order asymptotics might be  improved by the higher order  asymptotic methods of classical information geometry.

 The right hand panel shows the censored exponential (solid curve) embedded in the two-dimensional full exponential family in the $+1$-parameterization. The dashed contours are the log-likelihood contours in the full exponential family. It is clear, even  visually, that there is not much $+1$ curvature  for this family on this inferential scale. So this is an example where  the curved exponential family behaves inferentially like a one-dimensional full exponential family. In particular, the dimension reduction techniques found in \cite{Marr:Vos:On:2004},  can be used.  To see the effectiveness of this  idea,  Fig.~\ref{example2fig4.pdf} shows how well a saddlepoint based approximation does at approximating the distribution of the maximum likelihood estimator of the parameter of interest. \end{example}
\setcounter{example}{\value{foo}}

\subsection{Higher order asymptotics: saddlepoint method}\label{Computational Consequences}

 The saddlepoint approximation method is very important tool from classical information geometry, see Fig.~\ref{example2fig4.pdf} for an example. Using this method requires the solving of the so-called saddlepoint equation in an efficient and accurate manner and so for computational information geometry this only  needs to be done numerically. The problem of solving this non-linear equation is  tied to understanding the non-linear relationship between the $+1$ and $-1$-parameters, and hence the rigorous implementation of numerical methods requires understanding  the global geometry described above. For example, the issues surrounding such implementation being far from uniform across the simplex, it will help to be made aware if the method is being attempted in a region where first order asymptotics would work well or not.

\setcounter{foo}{\value{example}}
\setcounter {example} {1}
\begin{example}[continued]
Example \ref{censored exponential  example} is a  curved exponential family, \cite{Kass:Vos:1997}.  Consider Fig.~\ref{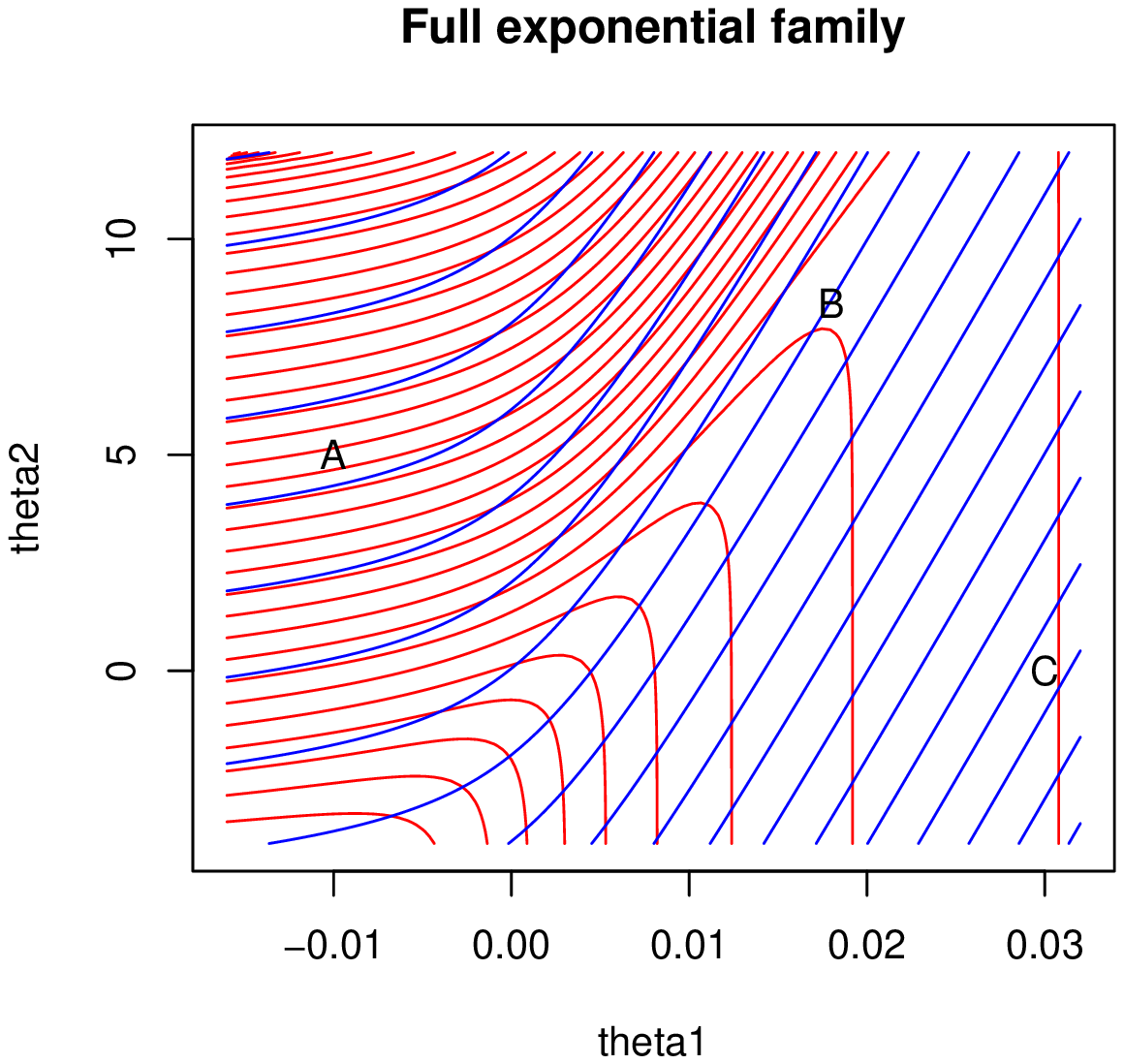}, this shows the level sets of the mean parameterization for the $2$-dimensional family  plotted in the natural parameters. Solving the saddlepoint equation requires mapping between these two coordinate systems.  The figure illustrates the issues which need considering in implementing numerical methods to do this. 
At point `A' in the figure we see that the level sets of the mean parameter are becoming close to parallel -- this reflects the fact that the Fisher information can be very close to singular, as discussed in \S \ref{Spectrum of Fisher Information}. At the point `B' the bifurcation in the parameters, described in \S \ref{Finite dimensional case}, is clear. Again,  the point `C' shows a region where there is close to linearity between the two coordinate systems which is typical of when first order asymptotic methods work well, see \S \ref{Higher order asymptotics: Edgeworth expansions}.

  \begin{figure}[h] 
    \centering
   \includegraphics[width=2in]{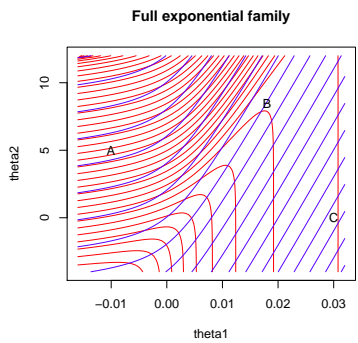}
    \caption{Mean parameterization (blue and red lines) plotted in the natural parameters}
    \label{example2parafig.eps}
 \end{figure}

\end{example}

\setcounter{example}{\value{foo}}

\section{Inference on Mixtures}\label{Mixture Inference}
\subsection{Lindsay's geometry and the simplex}

This section describes the way the mixture geometry of \cite{Lind:mixt:1995}  is related to  the information geometry of the  simplex. In particular, it will lead to extending Lindsay's structure in a way which will give considerable  computational advantages in, for example, computing the non-parametric maximum likelihood estimate of a mixture model and understanding its variability.

 \begin{figure}[htbp] 
   \centering
   \includegraphics[width=3in]{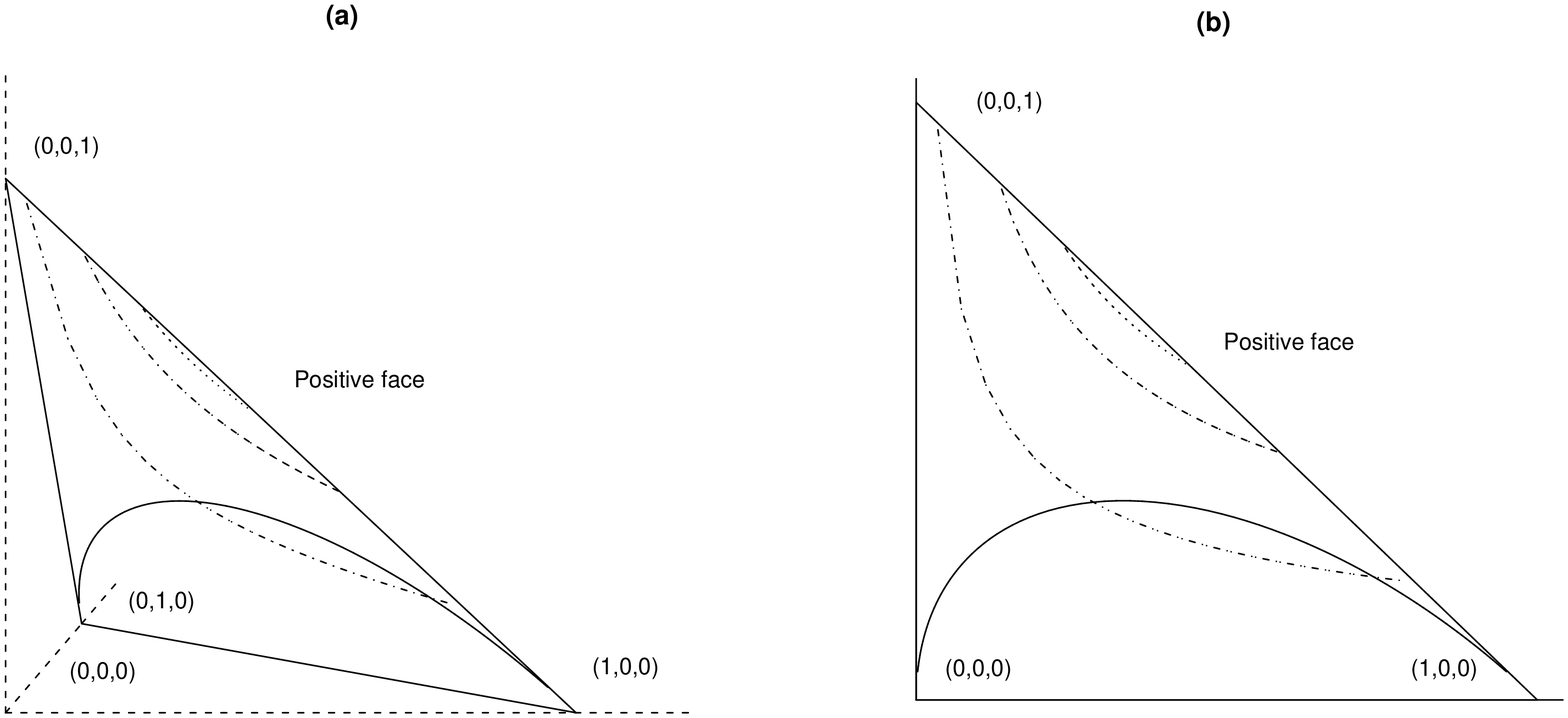}
   \caption{(a) The simplex with a one-dimensional full exponential family (solid) and likelihood contours (dashed) (b) The image of the simplex under the map $\Pi_L$ }
   \label{lindsayfig1}
\end{figure}

Lindsay's geometry  lies in an affine space which is determined by the observed data. In particular, it is always finite dimensional, and  the dimension is determined by the number of distinct observations.   Following the notation of \cite{Lesp:Kalb:1992}, which  looks at mixtures of the model $h(y |\theta)$  i.e. models of the form $f(y ; Q)= \int h(y |\theta) dQ(\theta)$, let 
$L_\theta= (L_1(\theta), \dots, L_{N^*}(\theta))$ represent the $N^*$ distinct likelihood values of $h(y_i |\theta)$ arising from the data $y_1, \dots y_n$. 
The likelihood on the space of mixtures is defined on the convex hull of the image of the map
$$\theta \rightarrow (L_1(\theta), \dots, L_{N^*}(\theta)) \subset {\rm R}^{N^*}.$$  Then the problem of finding the non-parametric likelihood estimate, determined by $\widehat Q$, is found by maximising a concave function over this convex set.

There are clear parallels between the convex geometry of Lindsay and the embedding in the $-1$-simplex. Lindsay's geometry is designed for working with the  likelihood so only concerns the observed data, rather than the full sample space. For simplicity consider discrete models where the distinct likelihood components are represented by probabilities $\pi_i$ where, by definition, $i$  lies  in the observed face  $\mathcal{P}$ defined in Theorem  \ref{shape of likelihood}  (Section 2.3).  The affine structure of Lindsay is  thus determined by the vertices of   $\mathcal{P}$,  see Fig.~\ref{lindsayfig1}. 
 
 \begin{definition}\label{Projection to lindsay}
  Define $\Pi_L$ to be the Euclidean orthogonal projection from a simplex to the smallest vector  space containing the vertices indexed by ${\mathcal P}$.
 \end{definition}

The following result is strongly connected to Theorem \ref{shape of likelihood}. In it, the level sets of the likelihood are now characterised as the pre-images of the mapping $\Pi_L$. It also shows that searching for the maximum likelihood in the convex hull in the simplex is the same as in Lindsay's geometry.

 \begin{thm}\label{lindsay embedding theorem}
 a) The likelihood on the simplex is completely determined by the likelihood on the image of $\Pi_L$.  In particular, all elements of the pre-image of  $\Pi_L$     have the same likelihood value. \\ (b)   $\Pi_L$  maps   $-1$ convex hulls  in the  $-1$-simplex to the  convex hull of Lindsay's geometry. 
 \end{thm}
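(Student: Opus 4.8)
The plan is to make the map $\Pi_L$ completely explicit and then let linearity do essentially all of the work. First I would observe that the ``smallest vector space containing the vertices indexed by $\mathcal{P}$'' is $W:=\mathrm{span}\{e_i : i \in \mathcal{P}\}$, the coordinate subspace of $\mathbb{R}^{k+1}$ on which every $\mathcal{Z}$-coordinate vanishes, so that the Euclidean orthogonal projection $\Pi_L$ simply retains the coordinates $\pi_i$, $i \in \mathcal{P}$, and zeroes out the coordinates $\pi_i$, $i \in \mathcal{Z}$. Since the log-likelihood equals $\sum_{i \in \mathcal{P}} n_i \log \pi_i$, it depends only on the surviving coordinates, which is the conceptual heart of both parts.

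For part (a), I would identify the fibres of $\Pi_L$ restricted to the simplex with the $-1$-affine slices of Theorem \ref{shape of likelihood}. If $\pi, \pi' \in \Delta^k$ satisfy $\Pi_L(\pi)=\Pi_L(\pi')$ then $\pi_i = \pi'_i$ for all $i \in \mathcal{P}$ and $\pi - \pi' \in W^\perp = \mathrm{span}\{e_i : i \in \mathcal{Z}\}$; as both probability vectors sum to one, $\pi-\pi'$ sums to zero, hence $\pi-\pi' \in V^0$ and $\pi' \in \pi + V^0$. The converse is immediate, so each fibre is exactly a slice $\pi + V^0$. Theorem \ref{shape of likelihood}(a) then says the log-likelihood is constant on every such slice, so it is constant on the fibres of $\Pi_L$ and therefore factors as $L = \tilde L \circ \Pi_L$ for a well-defined $\tilde L$ on the image $\Pi_L(\Delta^k)$. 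This gives both assertions of (a): the pre-image statement is the constancy on fibres, and ``determined by the likelihood on the image'' is the factorisation.

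For part (b), the key point is that $\Pi_L$ is \emph{linear}, and that $-1$-convex combinations in the $-1$-simplex are ordinary convex combinations of probability vectors. Hence for base points $\{\pi^{(j)}\}$ and weights $w_j \ge 0$, $\sum_j w_j = 1$,
\begin{equation}
\Pi_L\left(\sum_j w_j \pi^{(j)}\right) = \sum_j w_j \, \Pi_L\left(\pi^{(j)}\right),
\end{equation}
so $\Pi_L$ carries the $-1$-convex hull of $\{\pi^{(j)}\}$ onto the ordinary convex hull of $\{\Pi_L(\pi^{(j)})\}$. It then remains to recognise the latter as Lindsay's convex hull: writing $\pi(\theta)$ for the simplex representative of $h(\cdot\,|\,\theta)$, the surviving coordinates $(\pi_i(\theta))_{i \in \mathcal{P}}$ are precisely the distinct likelihood values $L_\cdot(\theta) = (L_1(\theta), \dots, L_{N^*}(\theta))$, since each $i \in \mathcal{P}$ indexes one distinct observed value and $N^* = |\mathcal{P}|$. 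Thus $\Pi_L(\pi(\theta)) = L_\cdot(\theta)$ under the natural coordinate identification, and the image of the $-1$-convex hull is exactly the convex hull of $\{L_\cdot(\theta)\}$ defining Lindsay's geometry.

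The hard part is not the linearity argument but the bookkeeping in this last identification: one must check that the projection lands in the same $\mathbb{R}^{N^*}$ in which Lindsay works, that ``distinct observation'' corresponds bijectively to ``observed bin'' $i \in \mathcal{P}$, and that no normalisation is silently imposed -- the projected vectors $L_\cdot(\theta)$ need not sum to one, exactly as in Lindsay's unnormalised likelihood coordinates. Once these correspondences are pinned down, (b) follows purely from the commutation of a linear map with convex combination, and (a) from Theorem \ref{shape of likelihood}.
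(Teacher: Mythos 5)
Your proof is correct and takes essentially the same route as the paper's: part (a) rests on the log-likelihood $\sum_{i\in\mathcal{P}} n_i\log\pi_i$ depending only on the $\mathcal{P}$-coordinates preserved by $\Pi_L$, and part (b) on the linearity of $\Pi_L$ commuting with convex combinations. The paper's own proof is just these two observations stated tersely; your additional bookkeeping (identifying the fibres with the slices $\pi+V^0$ of Theorem \ref{shape of likelihood} and matching the projected coordinates to Lindsay's $L_\cdot(\theta)$) is a correct and welcome elaboration rather than a different argument.
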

\begin{proof}
See Appendix. 
\end{proof}

Given  this result, it is natural to study the likelihood of a convex hull in the simplex rather than in Lindsay's space. There are some definite advantages to this, some of which will be explored in this paper, while others will only be briefly mentioned.   In Sections  \ref{Total positivity and local mixing} and \ref{Implementation of Algorithm} a new search algorithm is proposed which exploits the information geometry of the simplex. In particular, it exploits  dimension reduction directly in  the simplex to give a direct way of computing the non-parametric maximum likelihood estimate. 

A further advantage of working in the simplex is that while Theorem \ref{lindsay embedding theorem} shows that Lindsay's geometry captures the $-1$ and likelihood structure, it does not capture the full information geometry. For example, the expected  Fisher information cannot be represented, since it is a defined using the full sample space, and hence analysis of the variability of the non-parametric maximum likelihood estimate  is more natural in the full simplex, rather than in the data-dependent space proposed by Lindsay.

\subsection{Total positivity and local mixing}\label{Total positivity and local mixing}

In order to consider dimension reduction in the $-1$ simplex, and the corresponding dimension of the convex hull, this paper concentrates on the case where the mixture is over an exponential family. At first sight, Theorem \ref{total positivity} and the following comments may appear contradictory.   First  Theorem \ref{total positivity} shows  that $-1$-convex hulls of full exponential families have maximal dimension in the simplex,  whereas the concept of local mixing, and its extension to polytope approximation in  Theorem \ref{likelihood approximation  theorem},  shows that there exist very good low dimensional approximations to these convex hulls.  It is the existence of these low dimensional approximations which is exploited by the proposed algorithm. Using results on  total positivity, we have

\begin{thm}\label{total positivity}
The $-1$-convex hull of an open subset of a generic one dimensional exponential family is of full dimension.
\end{thm}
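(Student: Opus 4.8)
The plan is to exhibit, inside the given open subset of the one-dimensional exponential family, a set of $k+1$ points that are affinely independent in $\Delta^k$. Their $-1$-convex hull is then a full $k$-simplex, and since this hull sits inside the $-1$-convex hull of the whole open subset, the latter must be of full dimension $k$. The first step is to write the family in the explicit form appearing in Example \ref{trinomial example}, namely $p_i(\theta) = \pi_i^0 \exp(\theta b_i)/Z(\theta)$ with $Z(\theta) = \sum_l \pi_l^0 \exp(\theta b_l)$, all $\pi_i^0 > 0$, and the direction vector $b = (b_0, \dots, b_k)$ having pairwise distinct components; this distinctness is what I would read into the word ``generic''. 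Because the $-1$-affine structure of Definition \ref{def1} uses ordinary addition of probability vectors, a $-1$-convex hull is an ordinary convex hull, and ``full dimension'' simply means dimension $k$, i.e.\ nonempty relative interior in the hyperplane $\sum_i \pi_i = 1$.

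Next I would reduce affine independence to a determinant condition. Choosing distinct parameter values $\theta_0 < \cdots < \theta_k$ from the interval defining the open subset, I form the $(k+1)\times(k+1)$ matrix $M$ whose $j$-th row is $p(\theta_j)$. Since every row sums to $1$, a short computation shows that the rows of $M$ are linearly independent exactly when the points $p(\theta_0), \dots, p(\theta_k)$ are affinely independent, so it suffices to prove $\det M \neq 0$. Factoring $1/Z(\theta_j)$ out of the $j$-th row and $\pi_i^0$ out of the $i$-th column leaves $\det M$ equal to a strictly positive constant times the generalized Vandermonde determinant $\det[\exp(\theta_j b_i)]_{j,i=0}^{k}$.

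The crux is then the strict total positivity of the exponential kernel $K(\theta,b) = \exp(\theta b)$: for ordered arguments $\theta_0 < \cdots < \theta_k$ and distinct (hence orderable) values $b_0 < \cdots < b_k$, the determinant $\det[\exp(\theta_j b_i)]$ is strictly positive. I would invoke the standard total-positivity result (Karlin) already signalled in the statement's appeal to total positivity. This forces $\det M \neq 0$, so the chosen $k+1$ points, all lying in the open subset, are affinely independent and their convex hull is a genuine $k$-simplex inside $\Delta^k$.

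The main obstacle is precisely this total-positivity step, and in particular pinning down what genericity must guarantee. The argument requires the components $b_i$ to be pairwise distinct: if two coincide, the corresponding columns of $\det[\exp(\theta_j b_i)]$ collapse, the determinant vanishes, and the curve degenerates onto a lower-dimensional face of the simplex. Thus the ``generic'' qualifier should be understood as encoding (at the very least) distinctness of the $b_i$, equivalently that the direction $b$ is not partially constant across bins. Everything else — the equivalence between affine independence and nonvanishing of $\det M$, and the factorization that isolates the Vandermonde part — is routine linear algebra that I would not belabour.
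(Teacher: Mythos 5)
Your proposal is correct and follows essentially the same route as the paper's proof: select $k+1$ ordered parameter values, reduce affine independence of the resulting probability vectors to nonsingularity of the matrix they form (the paper does this via the rank identity for $B-\pi(\theta_0)1^T$, you via the observation that points on the hyperplane $\sum_i\pi_i=1$ are affinely independent iff linearly independent), factor out the positive weights and normalising constants, and invoke Karlin's strict total positivity of $\exp(xy)$; your reading of ``generic'' as pairwise distinctness of the components of $b$ matches the paper's own gloss. No substantive differences.
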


\begin{proof}
See Appendix.
 \end{proof}
In  this result ``generic'' means that the $+1$ tangent vector which defines the exponential family has components which are all distinct.

Theorem \ref{total positivity} can be contrasted with the results of  \cite{Marr:on:2002}  or \cite{Anaya:Marriott:local:2007} which state, under regularity and  for many applications,  mixtures of exponential families have accurate low dimensional representations.  The essential resolution of this apparent contradiction is that if the segment of the curve $\pi(\theta)$ for $\theta \in \Theta$ lies `close' to a low dimensional $-1$-affine subspace, then all mixtures over $\Theta$ also lie `close' to this space. The following discussion is then concerned with the appropriate definition of `close' for modelling purposes.

Motivated by the idea of a local mixture,  consider how well a full exponential family $\pi(\theta)$ can be approximated by a $-1$ polygonal path which vertices $\pi(\theta_i)$,  $i=1, \dots, M$.  Any point on this polygonal path will have the form 
\begin{equation}\label{definition of polygon}
\rho \pi(\theta_i) + (1-\rho) \pi(\theta_{i+1})
\end{equation} with $\rho \in [0,1]$. Define the segment $S_i:= \left\{ \rho \pi(\theta_i) + (1-\rho) \pi(\theta_{i+1})  | \rho \in [0,1] \right\}$. So,  on top of  the usual label switching identification issue with mixtures,  there is additionally the identification problem induced by 
\begin{equation}\label{segment Identifcation}
\int \left\{   \rho \pi(\theta_i) + (1-\rho) \pi(\theta_{i+1})\right\} dQ(\rho)  =  \int \left\{   \rho \pi(\theta_i) + (1-\rho) \pi(\theta_{i+1})\right\} dQ^\prime (\rho)  
\end{equation}  when $E_Q(\rho)= E_{Q^\prime} (\rho)$.  While lack of  identification is usually considered a statistical problem,  computationally  it restricts  the space  the likelihood needs to be optimised over.  It will be shown that  restricting attention to this  space has considerable computational advantages. 

Consider, then, the following definition and lemma.
\begin{definition}\label{definition of distance function} Given a norm $\| \cdot \|$, 
the curve $\pi(\theta)$ and the polygonal path $\cup S_i$ define the distance function by
$$d(\pi(\theta)) := \inf_{\pi \in \cup S_i}  \left\| \pi(\theta) - \pi  \right\|.$$
\end{definition}

\begin{lemma}\label{lemma on uniform distance}
If $d(\pi(\theta)) \le \epsilon$  for all $\theta$ then 
any point in the convex hull of $\pi(\theta)$ lies within $\epsilon$ of the convex hull of the finite set
$\pi(\theta_i)$.
\end{lemma}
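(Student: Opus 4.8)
The plan is to reduce everything to the convexity of the map $x \mapsto \mathrm{dist}(x, C)$, where $C := \mathrm{conv}\{\pi(\theta_i)\}_{i=1}^M$ is the convex hull of the finite vertex set (i.e.\ exactly the set the lemma compares against). The first observation I would make is that the polygonal path sits inside $C$: each segment $S_i = \{\rho\,\pi(\theta_i) + (1-\rho)\,\pi(\theta_{i+1}) : \rho \in [0,1]\}$ is a convex combination of two vertices, so $\bigcup_i S_i \subseteq C$. Consequently, for every $\theta$,
$$
\mathrm{dist}(\pi(\theta), C) \;\le\; \inf_{\pi \in \bigcup_i S_i} \|\pi(\theta) - \pi\| \;=\; d(\pi(\theta)) \;\le\; \epsilon ,
$$
the last inequality being the hypothesis. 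Thus every point \emph{on the curve} already lies within $\epsilon$ of $C$.

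The second and central step is to upgrade this from curve points to arbitrary points of the convex hull of the curve. Here I would invoke the standard fact that, for any convex set $C$, the function $f(x) := \mathrm{dist}(x, C)$ is convex. This is a one-line argument: given $x,y$ with (near-)optimal projections $c_x, c_y \in C$, the point $\lambda c_x + (1-\lambda) c_y$ lies in $C$ by convexity, whence
$$
f(\lambda x + (1-\lambda) y) \;\le\; \|\lambda(x - c_x) + (1-\lambda)(y - c_y)\| \;\le\; \lambda f(x) + (1-\lambda) f(y)
$$
by the triangle inequality. With convexity of $f$ in hand the conclusion is immediate: any point $p$ in the convex hull of $\{\pi(\theta)\}$ is a convex combination of curve points, so Jensen's inequality gives
$$
f(p) \;\le\; \int f(\pi(\theta))\, dQ(\theta) \;\le\; \epsilon ,
$$
which says precisely that $p$ lies within $\epsilon$ of $C$.

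There is essentially no hard analytic obstacle; the work lies entirely in arranging the inclusions and then applying convexity of the distance function in the correct order. The one point I would handle carefully is the reading of ``convex hull of $\pi(\theta)$'': if it is taken to mean integral mixtures $\int \pi(\theta)\, dQ(\theta)$, as in the surrounding mixture discussion, rather than finite convex combinations, then the finite Jensen step must be replaced by its integral form, but convexity of $f$ makes both cases go through identically. Since the ambient space is the finite-dimensional $\mathbb{R}^{k+1}$, one could alternatively invoke Carath\'eodory's theorem to reduce to finitely many terms and avoid the integral version altogether.
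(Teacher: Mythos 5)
Your proposal is correct and is essentially the paper's argument written out in full: the paper's entire proof is ``By the triangle inequality,'' which unpacks to exactly your step of taking convex combinations of nearby points of $C=\mathrm{conv}\{\pi(\theta_i)\}$ and applying the triangle inequality, here dressed up as convexity of the distance-to-$C$ function plus Jensen. No substantive difference in approach.
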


\begin{proof} By the triangle inequality. \end{proof}

Let $ \hat \pi ^{NP} $ be the non-parametric maximum likelihood estimate for mixtures of the curve $\pi(\theta)$.  A  consequence of Lemma \ref{lemma on uniform distance} is that, under the uniform approximation assumption, $ \hat \pi ^{NP} $  lies within $\epsilon$ of the convex hull of the polygon.   The question is then what norm is appropriate for measuring the quality of the polygonal approximation. 

\begin{definition}\label{definition of inner production}
Define the inner product 
$$\left< v, w \right>_\pi:= \sum_{i=0}^k \frac{v_i w_i}{\pi_i}$$ for $v, w \in \vm$ and $\pi$ such that $\pi_i > 0$ for all $i$. This defines a preferred point metric as discussed in \cite{Crit:Marr:Salm:pref:1993}. Further,  let $\|\cdot\|_\pi$ be the corresponding norm.
\end{definition}
As motivation for using such a metric, consider the Taylor expansion  for the likelihood
around $\hat \pi$ when the maximum is defined by turning point conditions, i.e. occurs at  a point in the interior of the simplex. Under these conditions, to high order, it follows that 
\begin{equation}\label{likelihood expansion turning point}
\ell(\pi) - \ell(\hat \pi) \approx -\frac{N}{2}  \left\| \pi- \hat \pi  \right\|_{\hat \pi}^2. 
\end{equation}
So small dispersions, as measured by $\|\cdot\|_{\hat \pi}$,  correspond to  small changes in likelihood values. Note that this  is  clearly not true under the standard Euclidean norm, where unbounded changes in likelihood values are possible.

Following  \cite{Lind:mixt:1995},   the maximum of the likelihood in a convex hull is determined by  the non-positivity of directional derivatives, rather than turning points.  So the following likelihood approximation theorem is appropriate.

\begin{thm}\label{likelihood approximation  theorem}
Let $\pi(\theta)$ be an exponential family, and $\{\theta_i\}$ a finite  and fixed set of support points  such that  $d(\pi(\theta)) \le \epsilon$ for all $\theta$. Further, denote by  $\hat \pi^{NP}$ and $\hat \pi$ the maximum likelihood estimates in the convex hulls of $\pi(\theta)$ and $\left\{ \pi(\theta_i) | i =1, \dots, M\right\}$ respectively,  and by $\hat \pi_i^G:= \frac{n_i}{N}$ the global maximiser in the simplex. Then, 
\begin{equation}\label{likelihood bound} 
\ell(\hat \pi^{NP}) - \ell (\hat \pi) \le  \epsilon N  || (\hat \pi^{G}-  \hat \pi^{NP} )     ||_{\hat \pi}+ o(\epsilon)
\end{equation}

\end{thm}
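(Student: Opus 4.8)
The plan is to replace the two-hull comparison by a single first-order (concavity) estimate against an auxiliary point of the smaller hull, and then to read the stated bound off the preferred-point inner product of Definition \ref{definition of inner production}. Write $C^{NP}$ and $C$ for the $-1$-convex hulls of $\{\pi(\theta)\}$ and $\{\pi(\theta_i)\}_{i=1}^M$; since every $\pi(\theta_i)\in C^{NP}$ we have $C\subseteq C^{NP}$, so $\ell(\hat\pi^{NP})\ge\ell(\hat\pi)$ and the left-hand side of (\ref{likelihood bound}) is nonnegative. As $\hat\pi^{NP}\in C^{NP}$, Lemma \ref{lemma on uniform distance}, read in the norm $\|\cdot\|_{\hat\pi}$, produces a point $\tilde\pi\in C$ with $\|\hat\pi^{NP}-\tilde\pi\|_{\hat\pi}\le\epsilon$. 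Because $\hat\pi$ maximises $\ell$ over $C$ and $\tilde\pi\in C$, we have $\ell(\hat\pi)\ge\ell(\tilde\pi)$, so it is enough to bound $\ell(\hat\pi^{NP})-\ell(\tilde\pi)$.

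Next I would invoke concavity of the log-likelihood (Theorem \ref{shape of likelihood}). Because $\pi(\theta)$ is an exponential family its probabilities are strictly positive, so $C$ and $C^{NP}$ lie in the interior of the simplex and every point appearing below has positive coordinates, making the metric of Definition \ref{definition of inner production} well defined; moreover the score $n_i/\pi_i$ is supported on the observed face $\mathcal{P}$, since $n_i=0$ off it. The supporting-hyperplane inequality then gives
$$
\ell(\hat\pi^{NP})-\ell(\tilde\pi)\ \le\ \sum_{i\in\mathcal{P}}\frac{n_i}{\tilde\pi_i}\bigl(\hat\pi^{NP}_i-\tilde\pi_i\bigr)\ =\ N\sum_{i\in\mathcal{P}}\frac{\hat\pi^{G}_i}{\tilde\pi_i}\bigl(\hat\pi^{NP}_i-\tilde\pi_i\bigr),
$$
using $\hat\pi^{G}_i=n_i/N$. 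The algebraic crux is the identity, valid for every $v\in V_{mix}$ and base point $\pi$ with positive coordinates,
$$
\sum_i\frac{\hat\pi^{G}_i}{\pi_i}\,v_i=\bigl\langle \hat\pi^{G}-\pi,\,v\bigr\rangle_\pi ,
$$
which holds because $\sum_i v_i=0$ annihilates the term $\sum_i \pi_i v_i/\pi_i$. Applying it with $\pi=\tilde\pi$ and $v=\hat\pi^{NP}-\tilde\pi\in V_{mix}$ (a difference of probability vectors), followed by Cauchy--Schwarz for $\langle\cdot,\cdot\rangle_{\tilde\pi}$, yields
$$
\ell(\hat\pi^{NP})-\ell(\tilde\pi)\ \le\ N\,\bigl\|\hat\pi^{G}-\tilde\pi\bigr\|_{\tilde\pi}\,\bigl\|\hat\pi^{NP}-\tilde\pi\bigr\|_{\tilde\pi}.
$$

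It then remains to turn the right-hand side into $\epsilon N\|\hat\pi^{G}-\hat\pi^{NP}\|_{\hat\pi}$ modulo $o(\epsilon)$. The factor $\|\hat\pi^{NP}-\tilde\pi\|_{\tilde\pi}$ is at most $\epsilon$ up to the change of preferred point from $\tilde\pi$ to $\hat\pi$; for the other factor the reverse triangle inequality gives $\bigl|\,\|\hat\pi^{G}-\tilde\pi\|-\|\hat\pi^{G}-\hat\pi^{NP}\|\,\bigr|\le\|\tilde\pi-\hat\pi^{NP}\|\le\epsilon$, so $\|\hat\pi^{G}-\tilde\pi\|=\|\hat\pi^{G}-\hat\pi^{NP}\|+O(\epsilon)$ in any fixed metric. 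Since each such correction enters multiplied by the already-present factor $\epsilon$, it contributes only $O(N\epsilon^2)=o(\epsilon)$ for fixed $N$, and collecting terms gives (\ref{likelihood bound}); the expansion (\ref{likelihood expansion turning point}) confirms that the discarded quadratic remainder is of the same, negligible, order.

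I expect the genuine difficulty to be this final conversion, namely controlling the preferred-point metric at the three nearby base points $\tilde\pi$, $\hat\pi$ and $\hat\pi^{NP}$. One must show that these coalesce as $\epsilon\to0$, so that $\langle\cdot,\cdot\rangle_{\tilde\pi}$ and $\langle\cdot,\cdot\rangle_{\hat\pi}$ agree up to a $(1+o(1))$ factor on the relevant coordinates; this is a stability statement for the argmax of $\ell$ under the $\epsilon$-perturbation $C\subseteq C^{NP}$ of the feasible hull. The strict concavity of $\ell$ on the observed face from Theorem \ref{shape of likelihood} pins down the within-face component of the maximiser, the off-face directions being flat for $\ell$ and hence immaterial to the likelihood value, while the uniform approximation hypothesis $d(\pi(\theta))\le\epsilon$ supplies the Hausdorff closeness $C\to C^{NP}$. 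By contrast, the first two paragraphs are essentially forced once the preferred-point geometry of Definition \ref{definition of inner production} is in place.
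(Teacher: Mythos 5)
Your proposal is correct and follows essentially the same route as the paper: both arguments reduce the problem, via Lemma~\ref{lemma on uniform distance}, to comparing $\hat\pi^{NP}$ with a point $\tilde\pi$ of the polygon hull at distance at most $\epsilon$, use $\ell(\hat\pi)\ge\ell(\tilde\pi)$, and bound the gap $\ell(\hat\pi^{NP})-\ell(\tilde\pi)$ by rewriting the score $n_i/\pi_i$ as $N\langle\hat\pi^{G}-\pi,\cdot\rangle_\pi$ (using $\sum_i v_i=0$) and applying Cauchy--Schwarz. The only divergence is that you anchor the linearisation at $\tilde\pi$ via the supporting-hyperplane inequality, whereas the paper Taylor-expands at $\hat\pi^{NP}$ along segments to curve points (splitting into support-point and non-support-point cases and absorbing the quadratic remainder into $o(\epsilon)$); the residual base-point ambiguity in the preferred-point norm that you flag at the end is equally present in the paper's own proof, which derives the bound in $\|\cdot\|_{\hat\pi^{NP}}$ but states it in $\|\cdot\|_{\hat\pi}$ without comment.
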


\begin{proof} 
See Appendix.
\end{proof}
\subsection{Implementation of Algorithm}\label{Implementation of Algorithm}

 Algorithms using the polygonal approximation technique will be evaluated in detail in future work. Here a general outline is given and 
 a couple of examples examined (Examples 1 and 4).  The fundamental idea is to compute the convex hull of a finite number of points on the curve as an approximation to the convex hull of the curve itself.  The positioning of the points can be decided by using singular value decomposition methods to see if the $+1$ line segment  joining  consecutive points has small enough $-1$ curvature. From these it is necessary to compute $\epsilon$ which bounds the  uniform approximation of the curve by the polygon and then apply Theorem \ref{likelihood approximation  theorem}.
 
The first example implements the theorem for a mixture of binomials.

\setcounter{foo}{\value{example}}

\setcounter {example} {0}
\begin{example}[continued]  Consider the  data  discussed in \cite{Kupp:Hase:1978} shown in part in Table \ref{example1table1}. Mixture models are  of interest scientifically since the data concerns frequency of implanted foetuses  in   laboratory animals,  and it could be expected that there is underlying clustering. Simple  plots  shows over-dispersion relative to the  variance of a fitted binomial model, which implies that a mixture approach might be appropriate. 
%
 
 \begin{table*}
\caption{Observed frequencies of number of dead implants}
\label{example1table1}
  \begin{tabular}{|c|cccccccc|} \hline
Number of dead implants & 0 & 1 & 2 & 3 & 4 & 5 & 6 & 7 \\
Frequency & 214 &  154 &   83  & 34  & 25 &    9  &  5  & 0\\ \hline
         \end{tabular}
\end{table*}

 Using the polygonal approximation approach allows us to compute easily a good approximation to the mixture. The result can be shown  in Fig.~\ref{example21ig2.pdf}. The crosses show  the fitted model with circles  the data, here with a mixture over $Bin(\pi, 7)$. We also see the mixing proportions and the directional derivative. 
 
   \begin{figure}[htbp] 
    \centering
   \includegraphics[width=3in]{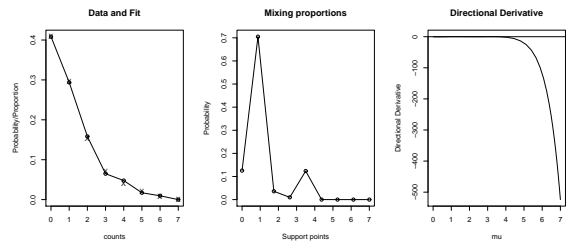}
    \caption{The mixture fit using polygonal approximation}
    \label{example21ig2.pdf}
 \end{figure}


Note in this example the near perfect fit of the data with the mixture model. In terms of the simplex this is easily explained since the maximum likelihood estimate in the simplex, in this case, lies inside the convex hull of the binomial model.  
\end{example}

\setcounter{example}{\value{foo}}

\setcounter{foo}{\value{example}}
\setcounter {example} {3}
\begin{example}[continued]  
For this example, the  distribution of the random variables at all the observed  nodes lies in the $2^3-1= 7$ dimensional simplex, parameterized by the joint probabilities for $(X_1,X_2, X_3)$. If  $H$ were observed each node  would be independent, so that conditionally on $H$ this  space is $3$-dimensional, 
and can be parameterized by the marginal probabilities.  It is easy to show that the conditional model  includes all  $8$ vertices of the $7$ simplex, intersects  six pairs of opposite edges and three pairs of opposite 2-faces.  The full  tripod model is a two component mixture over the three-dimensional full exponential family. Unlike the full convex hull of Example \ref{binomial example},  the two component mixture model need not be  convex in the $-1$-affine space and so can have a complex multimodal likelihood structure.  In order to aid visualisation,  we also consider here the corresponding bipod model, see Fig.~\ref{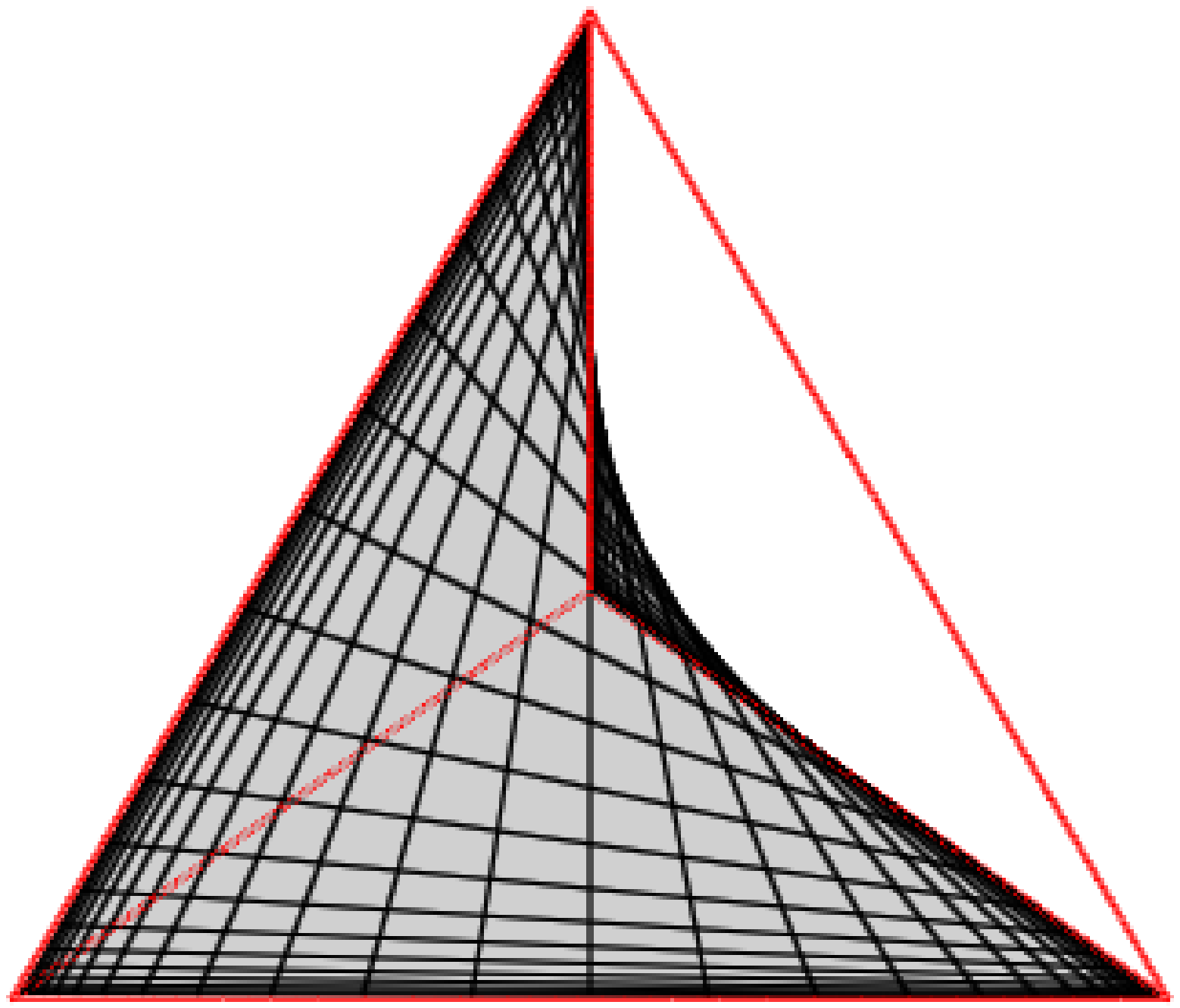}

  \begin{figure}[h] 
    \centering
   \includegraphics[width=2in]{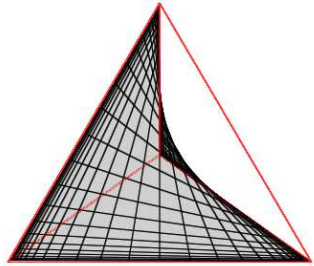}
    \caption{The bipod model: space of unmixed independent distributions showing the ruled-surface structure.}
    \label{bipodexamplefig1.eps}
 \end{figure}

   In the tri- and bi-pod examples,  the unmixed model can be approximated  with unions of $-1$-affine polytopes. These can then be  used to compute likelihood objects on the two hull and convex hull very efficiently just using convex programming. On each polytope the likelihood has a unique maximum 
which may, or may not, be on its boundary. To see the whole two-hull structure,  you just need to glue together this finite number of polytopes and their maxima. Local maxima in the likelihood correspond to internal maxima in the polytopes. 

To see how to construct these approximating polytopes, consider Figure  \ref{bipodexamplefig1.eps}.  The curved surface shown is a, so-called, ruled-surface intersecting the boundary in two pairs of opposite edges. Choose a finite number of support points on each edge of the surface and the same  number on  the opposed  edge. Joining corresponding pairs of points gives a set of $-1$ convex sets, or slices, close to the surface. Any point in the two hull -- that is a convex combination of two  points -- lies in the convex polytope which is the convex hull of two of these slices.

\end{example}

\setcounter{example}{\value{foo}}

\section{Discussion and further work}\label{Discussion and further work}

This paper focused on four main objectives: (1) it  showed that  extended multinomial distributions can be used to construct a computational framework demonstrating commonality between the distinct areas of   information geometry, mixture geometry  and the geometry of graphical models,  (2) it showed how this  structure allow numerically implementation of  results from these areas,   (3) it extended results of information geometry to a simplicial based geometry  for models which do not have a fixed dimension or support, and finally (4) it began the process of   building a computational framework which will act as a proxy for the `space of all distributions'. 

 In  continuous examples,  a compactness condition  is used to keep the underlying geometry finite.  A following paper will look at the case where the compactness condition is not needed. In this case,  infinite dimensional simplexes,  and their closures, are used as the  `space of all distributions',  the extension of classical information geometry here requiring  careful  consideration of convergence, not required here due to finiteness.

Later work will discuss a variety of statistical inference problems -- including model selection and model uncertainty -- using both these finite and infinite frameworks.

\section*{Acknowledgement}
The authors  gratefully thank EPSRC  for the support of  Grant Number EP/E017878/.


\appendix

\section*{Appendix 1: On the spectral decomposition  of the  Fisher information}\label{Appendix on spectral decomposition  of the  Fisher information}

For notational convenience denote $\pi_{(0)}= (\pi_1, \dots, \pi_k)^T$ so that $1_k^T \pi_{(0)} = 1 - \pi_0$ (i.e. bin $0$ is omitted in the $k\times 1$ vector $\pi_{(0)}$) and $\Pi_{(0)} = diag(\pi_{(0)})$. Without loss, after permutation, assume
$\pi_1 \ge \dots \ge \pi_k$.  Apart from the trivial case $\pi_0 = 1$, when $I(\pi):=\Pi_{(0)}- \pi_{(0)}\pi_{(0)}^T$ vanishes, its spectral decomposition (SpD)   comes in the following cases.
\begin{enumerate}
\item[Case 1] $\pi_l > \pi_{l+1}= \dots \pi_k=0$ for some $0< l < k$. 
 The SpD of $$I(\pi) =
 \left(  
 \begin{array}{c|c}
 \Pi_+ - \pi_+ \pi_+^T & 0 \\ \hline
 0 & 0
 \end{array}
 \right),
$$where $\Pi_+= diag( \pi_+  )$ and $\pi_+= ( \pi_1, \dots, \pi_l)^T$, follows from that of $ \Pi_+ - \pi_+ \pi_+^T $ given below.

\item[Case 2]  $k=1$ is trivial.

\item[Case 3] $k > 1 , \pi= \lambda 1_k, \lambda >0$. The  SpD of $I(\pi)$ is
$$ \lambda C_k + \lambda (1- k\lambda) J_k$$ where
$C_k = I_k -J_k$ and $J_k= k^{-1} 1_k 1_k^T$.  Here $\lambda$ has multiplicity $k-1$ and eigen-space $ [Span(1_k)]^\perp$, while $\tilde \lambda:= \lambda(1-k\lambda)$ has multiplicity $1$ and eigen-space$=Span(1_k)$.
In particular, using $(1-k\lambda) = \pi_0$,
$$I(\pi) {\rm \;is \;singular\;}  \iff \pi_0 = 0.$$

\item[Case 4] This is the generic case. Denoting by $O_m$ the zero matrix of order $m\times m$, and by $P(\nu)$ the rank one orthogonal projector onto Span($\nu$),  $(\nu \ne 0)$, if $\pi_{(0)}= (\lambda_1 1^T_{m_1}| \dots | \lambda_g 1^T_{m_g})^T$ ,
$g >1$ and $\lambda_1 > \dots > \lambda_g >0$, then the SpD is
$$
\sum_{i=1, m_i>1}^g \lambda_i diag(O_{m_{i-}}, C_{m_i}, O_{m_{i-}})
+ \sum_{i=1}^g\tilde \lambda_i P\left(   \left( \frac{  \lambda_1}{  \tilde \lambda_i - \lambda_1 } 1_{m_1}|  \dots |   \frac{  \lambda_g}{  \tilde \lambda_i - \lambda_g } 1_{m_g}    \right)^T \right), 
$$ where  $m_{i-}=\sum \{m_j | j < i \}$,  $m_{i+}=\sum \{m_j | j > i \}$ and the $\tilde \lambda_i$ are the zeros of
$$h(\tilde \lambda) := 1+ \sum_{i=1}^g \frac{m_i \lambda_i^2}{\tilde \lambda -\lambda_i} =
(1- \sum_{i=1}^g m_i \lambda_i) + \tilde \lambda \left(\sum_{i=1}^g \frac{m_i \lambda_i  }{\tilde \lambda -\lambda_i} \right).$$
In particular,  $\{ \tilde \lambda_i : i=1, \cdots, g \}$   are simple eigenvalues satisfying (\ref{interleaving result}) while, whenever $m_i >1$, $\lambda_i$ is also an eigenvalue having multiplicity $m_{i-1}$.
Further, expanding $\det(I(\pi))$, we again find:
$$I(\pi) {\rm \;is \;singular\;}  \iff \pi_0 = 0,$$
so that $\widetilde{\lambda}_{g}>0\Leftrightarrow\pi_{0}>0$, as
claimed. Finally, we note that each $\widetilde{\lambda}_{i}$ $(i<g)$\ is
typically (much) closer to $\lambda_{i}$ than to $\lambda_{i+1}$. For,
considering the graph of $x\rightarrow1/x$, $h\left(  (\lambda_{i}%
+\lambda_{i+1})/2+\delta(\lambda_{i}-\lambda_{i+1})/2\right)  $ $(-1<\delta
<+1)$ is well-approximated by
\[
1-\frac{2m_{i}\lambda_{i}^{2}}{(\lambda_{i}-\lambda_{i+1})(1-\delta)}%
+\frac{2m_{i+1}\lambda_{i+1}^{2}}{(\lambda_{i}-\lambda_{i+1})(1+\delta)}%
\]
whose unique zero $\delta_{\ast}$ over $(-1,1)$\ is positive whenever, as will
typically be the case, $m_{i}=m_{i+1}$ (both will usually be $1$) while
$(m_{i}\lambda_{i}+m_{i+1}\lambda_{i+1})<1/2$. Indeed, a straightforward
analysis shows that, for any $m_{i}$ and $m_{i+1}$, $\delta_{\ast}%
=1+O(\lambda_{i})$ as $\lambda_{i}\rightarrow0$.
\end{enumerate}

\section*{Appendix 2: Proofs} \label{Appendix 2}


\begin{proof}[Proof of Theorem  \ref{shape of likelihood}]    (a) Immediate.\\

(b)  Let $v \in V_{\mathrm{mix}}$ so that $\sum
v_{i}=0$ and write $v$ as $x+y$ where
\[
x_{i}= \left\{
\begin{array}
[c]{cl}%
v_{i} & \mathrm{if\;} i \in\mathcal{Z} \backslash\{ k^{*}\}\\
0 & \mathrm{if\;} i \in\mathcal{P}\\
\ -\sum_{i \in\mathcal{Z} \backslash\{ k^{*}\} } v_{i} & \mathrm{if\;} i
=k^{*}%
\end{array}
\right.
\]
and
\[
y_{i}= \left\{
\begin{array}
[c]{cl}%
0 & \mathrm{if\;} i \in\mathcal{Z} \backslash\{ k^{*}\}\\
v_{i} & \mathrm{if\;} i \in\mathcal{P}\\
\ v_{k}^{*}+ \sum_{i \in\mathcal{Z} \backslash\{ k^{*}\} } v_{i} &
\mathrm{if\;} i =k^{*}%
\end{array}
\right.
\]
Then, it is immediate that $x$ is in $V^{0}$ and $y$ is in $V^{k^{*}}$, the
decomposition $v = x + y$ being clearly unique.
\end{proof}

\begin{proof} [Proof of Theorem  \ref {general information loss}]   Let $\{ B_k \}_{k=0}^K$ be any finite  measurable partition of ${\cal X}$. Then defining 
$\pi_k(\theta) := \int_{B_k} f(x; \theta) dx$ gives for $i=1, \dots , N,$ and $k=0, \dots, K,$
$$
\pi_{k(i)}(\theta) =\int_{B_{k(i)} } \left\{ f(x_i ; \theta) + (x-x_i)^T \frac{ \partial f }{  \partial x }(x^*_i;\theta) \right\}     dx
$$ where $x^*_i$ is a convex combination of $x$ and $x_i$, \cite{Apost;Math:1965} p.~124, Thm 6--22, and $x_i \in B_{k(i)}$. Thus,
\begin{eqnarray}
\left| \pi_{k(i)} (\theta) - f(x_i; \theta) |B_{k(i)}|  \right| &=&    \left|  \int_{B_{k(i)}}  (x- x_i)^T \frac{ \partial f }{  \partial x }(x^*_i;\theta) dx  \right|  \nonumber \\
&\le & M \,{\rm diam}(B_{k(i)}) |B_{k(i)}|,  \label{density bound} 
\end{eqnarray} where $|B|:= \int_B dx$ and ${\rm diam}(B):= \sup_{(x, y) \in B^2} \| x-y \|$.

It is clear that for compact ${\cal X}$ there exists a sequence of finite measurable  partitions 
${\cal B}(\delta) = \left\{ B_k(\delta) \right\}_{k=0}^{K(\delta)}$ such that as $\delta \rightarrow 0_+$
\begin{equation}\label{shrinking partition}
\max \left|  B_k(\delta) \right| \rightarrow 0, \max \left\{ {\rm diam}(B_k(\delta) \right\} \rightarrow 0 
\end{equation} From (\ref{density bound}) it follows that 
$$\pi_{k(i)}(\theta)  = f(x_i; \theta)\left| B_{k(i)} (\delta) \right|+ o( \left| B_{k(i)} (\delta) \right|   ),     $$ so that
$$
\frac{ Lik_d(\theta)   }{ Lik_d(\theta_0)    } = \frac{ Lik_c(\theta)   }{ Lik_c(\theta_0)    } \left\{   \frac{  \prod_{i=1}^N \left(  1+ \frac{o( \left| B_{k(i)} (\delta) \right|   ) }{   f(x_i; \theta_0)\left| B_{k(i)} (\delta) \right|}   \right)  }{  \prod_{i=1}^N \left(  1+ \frac{o( \left| B_{k(i)} (\delta) \right|   ) }{   f(x_i; \theta)\left| B_{k(i)} (\delta) \right|}   \right)    }  \right\} .
$$
Since $f(x_i;\theta)$ is bounded away from zero for all $\theta$, this gives
\begin{eqnarray*}
\frac{ Lik_d(\theta)   }{ Lik_d(\theta_0)    } &  = & \frac{ Lik_c(\theta)  (1+ O(\delta) )  }{ Lik_c(\theta_0)   (1+ O(\delta))  } \\
&=&  \frac{ Lik_c(\theta)   }{ Lik_c(\theta_0)   }  (1+ O(\delta)),
\end{eqnarray*} from which the result follows. 

\end{proof}

\begin{proof}[Proof  of Theorem  \ref{exponential information loss theorem} ] 
From the uniform continuity of $s(x)$ and the compactness of $s({\cal X})$, there exists a finite measurable partition $\{B_k\}_{k=0}^{K(\epsilon)}$ such that for all $k$ and for all $x, y \in B_k$,
\begin{equation}\label{bound on s(x)}
\| s(x) - s(y) \| \le \epsilon.
\end{equation} It follows from (\ref{bound on s(x)}) that for all $x \in B_k$ and for all $\theta \in \Theta$,
\begin{equation}\label{bound on mean of s(x)}
\| s(x) - E_\theta(s(X))| X \in B_k)  \|  \le \epsilon.
\end{equation}  From (\ref{bound on mean of s(x)})  it further follows that 
\begin{equation}\label{bound on covariance}
Cov_\theta(s_r(x), s_s(x) | X \in B_k) = O(\epsilon^2)
\end{equation} and
\begin{equation}\label{bound on shewness}
T_{rst}(\theta |k) := E_\theta( t_r t_s t_t  | X \in B_k) = O(\epsilon^3)
\end{equation} where $ t_r := s_r- E(s_r(X) | X \in B_k)$.

Further by direct calculation it follows that 
\begin{eqnarray}
\frac{\partial}{\partial \theta_r} \log \pi_k(\theta) & = &E_\theta(s_r(X) | X \in B_k)- \frac{\partial \psi }{\partial \theta_r} (\theta)  \label{direct calc 1} \\
\frac{\partial^2}{\partial \theta_r\partial \theta_s} \log \pi_k(\theta) & = &Cov_\theta(s_r(X) , s_s(X) | X \in B_k)- \frac{\partial^2 \psi }{\partial \theta_r\partial \theta_s} (\theta)  \label{direct calc 2} \\
\frac{\partial^2}{\partial \theta_r \partial \theta_s \partial \theta_t} \log \pi_k(\theta) & = &E_\theta( t_r t_s t_t  | X \in B_k)  - \frac{\partial^3 \psi }{\partial \theta_r \partial \theta_s \partial \theta_t}(\theta)   \label{direct calc 3}.
\end{eqnarray} Finally, (a)  follows immediately from (\ref{bound on s(x)}) and (\ref{bound on mean of s(x)}),
(b)   from (\ref{bound on covariance}) and (\ref{direct calc 2}), and 
(c)  from (\ref{bound on shewness}) and (\ref{direct calc 3}).
\end{proof}

\begin{proof}[Proof  of Corollary  \ref{corollary for exponential family}] 
The score equations for $\hat \theta_c$ are   $\frac{\partial \psi}{\partial \theta_k} ( \hat \theta_c)= \frac{ \sum s_k(x_i) }{N}$, while from (\ref{direct calc 1}) those for $\hat \theta_d$ are
$$
\frac{\partial \psi}{\partial \theta_k} ( \hat \theta_d)= \frac{ \sum n_k E(s_k(X) | X \in B_k)  }{N}. 
$$
Using (\ref{bound on covariance}) and that $\psi^\prime$ has a continuous inverse gives (\ref{difference in MLE}), while (\ref{difference in information }) follows from  (\ref{difference in MLE}) and (\ref{direct calc 2}).

\end{proof}

\begin{proof}[Proof  of Theorem  \ref{lindsay embedding theorem}]  
(a) The log-likelihood can be written as 
 $\sum_{i \in {\mathcal P}} n_i \log \pi_i $ which is clearly constant for all probability vectors with the same image under $\Pi_L$ since they share  the same elements $\pi_i, i \in {\mathcal P}$.
 (b) Since $\Pi_L$     is linear it preserves $-1$ convexity.
\end{proof}

\begin{proof}[Proof  of Theorem  \ref{total positivity}] 
%
%
%
%

  For any $(\pi_{i})\in\Delta^{k}$ with each $\pi_{i}>0$, $\theta
_{0}< \cdots <\theta_{k}$ and $s_{0}< \cdots <s_{k}$, let $B=(\pi(\theta_{0}),...,\pi(\theta_{k}))$ have general element $$\pi_{i}(\theta_{j}):=\pi_{i}\exp[s_{i}\theta_{j}-\psi(\theta_{j})].$$ Further, let $\widetilde{B}=B-\pi(\theta_{0})1_{k+1}^{T}$, whose general column is $\pi(\theta_{j})-\pi(\theta_{0})$. Then, it suffices to show that $\widetilde{B}$ has rank
$k$. But, using  \cite{Hous:Theory:1975} p.33, $Rank(\widetilde{B})=Rank(B)-1$, so that
\[
Rank(\widetilde{B})=k\Leftrightarrow B\text{ is nonsingular }\Leftrightarrow
B^{\ast}\text{ is nonsingular,}%
\]
where $B^*=(\exp[s_{i}\theta_{j}])$. It suffices, then, to recall  \cite{Karlin;Total;1968}
that $K(x,y)=\exp(xy)$ is strictly total positivity (of order\ $\infty$), so
that $\det B^*>0$.

 \end{proof}

\begin{proof}[Proof  of Theorem \ref{likelihood approximation  theorem}]

   We use a similar expansion to (\ref{likelihood expansion turning point}), adapted to take into
account the fact that the NPMLE is defined by directional derivatives being non-negative, rather than zero \cite{Lind:mixt:1995}.

If $\pi$ is a member of the convex hull of $\pi(\theta),$ then the directional derivative from $\hat \pi^{NP}$ to $\pi$ is a finite convex combination of 
elements of the convex cone of  directional derivatives from $\hat \pi^{NP}$ to points in the curve $\pi(\theta)$.
For any point $\pi(\theta)$ consider the  perturbation from $\hat \pi^{NP}$ of the form 
$$\pi(\lambda) := \hat \pi^{NP} + \lambda(\pi(\theta) - \hat \pi^{NP}).$$  There are two cases to consider: (i) either $\theta$ is a support  point of $ \hat \pi^{NP}$ or  (ii) it is not.

\noindent Case (i)  In this case the directional derivative are zero. Accordingly  we can apply (\ref{likelihood expansion turning point}) directly to have that 
the change in log-likelihood is $o(\epsilon)$.

\noindent Case (ii) In this case, for small enough positive $\lambda$,   $\pi(\lambda)$ remains in the convex hull. Further, the difference in log-likelihood values is then $$\sum_{i | n_i > 0}  n_i \log(\pi_i(\lambda)) - \sum_{i | n_i > 0} n_i \log (\hat \pi^{NP}_i).$$ Since the directional derivatives are now  non-zero, consider  the first order term in the Taylor expansion of this difference:
\begin{eqnarray*}
\lambda \left. \left\{ \sum_{i | n_i > 0} \frac{n_i  (\pi_i(\theta) - \hat \pi_i^{NP}) }{ \pi_i(\lambda)}\right\} \right|_{\lambda=0} & = & \lambda \sum_{i=0}^k \frac{n_i  (\pi_i(\theta) - \hat \pi_i^{NP}) }{\hat \pi_i^{NP}} \\
&=& \lambda N \sum_{i=0}^k  \frac{(\hat  \pi^{G}_i -  \hat \pi_i^{NP}  )  (\pi_i(\theta) - \hat \pi_i^{NP} ) }{ \hat \pi_i^{NP}} \\
&=& \lambda N \left<  (\hat  \pi^{G} - \hat \pi^{NP}   ) ,  (\pi(\theta) - \hat \pi^{NP}) \right>_{\hat \pi^{NP}} \\
& \le &  \lambda N  || ( \hat \pi^{G}-  \hat \pi^{NP})      ||_{\hat \pi^{NP}} ||  (\pi(\theta) - \hat \pi^{NP})    ||_{\hat \pi^{NP}}. 
\end{eqnarray*}
Considering  $\lambda$  small enough that 
$$  || \pi(\lambda)-\hat \pi ^{NP}  || = \lambda ||  (\pi(\theta) -\hat \pi ^{NP})    ||_{\hat \pi ^{NP}}  \le \epsilon,$$
we have that to first order the change in log-likelihood values for points  $\pi(\lambda)$  within $\epsilon$ of ${\hat \pi ^{NP}} $ is bounded by
$$
 \epsilon N  || ( \hat \pi^{G}-  {\hat \pi ^{NP}} )     ||_{\hat \pi ^{NP}}.
$$
So it has been shown that  all points in the convex hull of $\pi(\theta)$ which are within $\epsilon$ of $\hat \pi^{NP}$ satisfy (\ref{likelihood bound}). From Lemma \ref{lemma on uniform distance} there is at least one point in the convex hull of the polygon which is within $\epsilon$ of the convex hull. Hence the maximum likelihood value at $\hat \pi$ also satisfies  (\ref{likelihood bound}). 
\end{proof}

\bibliographystyle{plain} 
\bibliography{refsfound}

\begin{thebibliography}{10}

\bibitem{Amar:1990}
S.-I. Amari.
\newblock {\em {D}ifferential-geometrical methods in statistics}.
\newblock Springer-Verlag, 1990.

\bibitem{Anaya:Marriott:local:2007}
K.~Anaya-Izquierdo and P.~Marriott.
\newblock Local mixtures models of exponential families.
\newblock {\em Bernoulli}, 13(3):623--640, 2007.

\bibitem{Apost;Math:1965}
T.M. Apostol.
\newblock {\em Mathematical Analysis}.
\newblock Addison-Wesley:Reading, MA, 1965.

\bibitem{Barn:1978}
O.E. Barndorff-Nielsen.
\newblock {\em Information and exponential families in statistical theory}.
\newblock John Wiley \& Sons, 1978.

\bibitem{Barn:Blae:1983}
O.E. Barndorff-Nielsen and P.~Blaesild.
\newblock Exponential models with affine dual foliations.
\newblock {\em Annals of Statist.}, 11(3):753--769, 1983.

\bibitem{Barn:Cox:1989}
O.E. Barndorff-Nielsen and D.R. Cox.
\newblock {\em Asymptotic techniques for use in statistics}.
\newblock Chapman \& Hall, 1989.

\bibitem{Barn:Cox:1994}
O.E. Barndorff-Nielsen and D.R. Cox.
\newblock {\em Inference and asymptotics}.
\newblock Chapman \& Hall, 1994.

\bibitem{Brow:1986}
L.D. Brown.
\newblock {\em Fundamentals of statistical exponential families: with
  applications in statistical decision theory}.
\newblock Institute of Mathematical Statistics, 1986.

\bibitem{Brys:Sidd:1969}
M.C. Bryson and M.M. Siddiqui.
\newblock Survival times: some criteria for aging.
\newblock {\em JASA}, 64:1472--1483, 1969.

\bibitem{Crit:Marr:Salm:pref:1993}
F.~Critchley, P.~Marriott, and M.~Salmon.
\newblock Preferred point geometry and statistical manifolds.
\newblock {\em The Annals of Statistics}, 21:1197--1224, 1993.

\bibitem{Csis:Matu:2005}
I.~Csiszar and F.~Matus.
\newblock Closures of exponential families.
\newblock {\em The Annals of Probability}, 33(2):582--600, 2005.

\bibitem{Edels:Algorthms:1987}
H.~Edelsbrunner.
\newblock {\em Algorithms in combinatorial geometry}.
\newblock Springer-Verlag: NewYork, 1987.

\bibitem{Efro:1975}
B.~Efron.
\newblock Defining the curvature of a statistical problem (with applications to
  second order efficiency).
\newblock {\em The Annals of Statistics}, 3(6):1189--1242, 1975.

\bibitem{Rina:Fein:2011}
S~Feinberg and A.~Rinaldo.
\newblock Maximum likelihood estimation in log-linear models: Theory and
  algorithms.
\newblock {\em arxiv:1104.3618v1}, 2011.

\bibitem{Geig:Heck:Strat:2001}
D.~Geiger, D.~Heckerman, H.~King, and C.~Meek.
\newblock Stratified exponential families: Graphical models and model
  selection.
\newblock {\em Annals of Statistics}, 29(2):505--529, 2001.

\bibitem{Gibilisco:Wynn:Alge:2010}
P.~Gibilisco, E.~Riccomagno, M.P. Rogantin, and H.P. Wynn.
\newblock {\em Algebraic and Geometric Methods in Statistics}.
\newblock New York, NY: Cambridge University Press, 2010.

\bibitem{Hand:Daly:Lunn:1994}
D.J. Hand, F.~Daly, A.D. Lunn, K.J. McConway, and E.~Ostrowski.
\newblock {\em A handbook of small data sets}.
\newblock Chapman and Hall, London, 1994.

\bibitem{Horn:John:1985}
R.A. Horn and C.R. Johnson.
\newblock {\em Matrix Analysis}.
\newblock CUP, 1985.

\bibitem{Hous:Theory:1975}
A.S. Householder.
\newblock {\em The Theory of Matrices in Numerical Analysis}.
\newblock Dover Publications, 1975.

\bibitem{Karlin;Total;1968}
S.~Karlin.
\newblock {\em Total Positivity, Vol. I}.
\newblock Stanford University Press, 1968.

\bibitem{Kass:Vos:1997}
R.E. Kass and P.W. Vos.
\newblock {\em Geometrical foundations of asymptotic inference}.
\newblock John Wiley \& Sons, 1997.

\bibitem{Kupp:Hase:1978}
L.L. Kupper and J.K. Haseman.
\newblock The use of a correlated binomial model for the analysis of certain
  toxicological experiments.
\newblock {\em Biometrics}, 34(1):69--76, 1978.

\bibitem{Laur:1996}
S.L. Lauritzen.
\newblock {\em Graphical models}.
\newblock Oxford University Press, 1996.

\bibitem{Lesp:Kalb:1992}
M.L. Lesperance and J.D. Kalbfleisch.
\newblock An algorithm for computing the nonparametric {MLE} of a mixing
  distribution.
\newblock {\em Journal of the American Statistical Association}, 87:120--126,
  1992.

\bibitem{Lind:mixt:1995}
B.G. Lindsay.
\newblock {\em Mixture Models: Theory, Geometry, and Applications}.
\newblock Institute of Mathematical Statistics, 1995.

\bibitem{Marr:on:2002}
P.~Marriott.
\newblock On the local geometry of mixture models.
\newblock {\em Biometrika}, 89(1):77--93, 2002.

\bibitem{Marr:Vos:On:2004}
P.~Marriott and P.W. Vos.
\newblock On the global geometry of parametric models and information recovery.
\newblock {\em Bernoulli}, 10:639--649, 2004.

\bibitem{Marr:West:On:2002}
P.~Marriott and S.~West.
\newblock On the geometry of censored models.
\newblock {\em Calcutta Statistical Association Bulletin}, 52:235--249, 2002.

\bibitem{McCu:1987}
P.~McCullagh.
\newblock {\em Tensor methods in statistics}.
\newblock Chapman \& Hall, 1987.

\bibitem{Murr:Rice:1993}
M.K. Murray and J.W. Rice.
\newblock {\em Differential geometry and statistics}.
\newblock Chapman \& Hall, 1993.

\bibitem{2009-CompInfoGeomMeaningDistance-CSL}
F.~Nielsen.
\newblock Computational information geometry: Pursuing the meaning of
  distances.
\newblock In {\em Open Systems Science}. 2009.
\newblock (in Japanese).

\bibitem{Pist:Rico:Wynn:2000}
G.~Pistone, E.~Riccomagno, and H.P. Wynn.
\newblock {\em Algebraic Statistics: Computational Commutative Algebra in
  Statistics}.
\newblock Chapman and Hall, 2000.

\bibitem{Rina:2006}
A.~Rinaldo.
\newblock On maximum likelihood estimation in log-linear models.
\newblock {\em Tech. Rep. Dep. of Statistics, Carnegie Mellon University},
  2006.

\bibitem{Rina:Fein:Zhou:2009}
A.~Rinaldo, Feinberg S., and Zhou Y.
\newblock On the geometry of discrete exponential families with applications to
  exponential random graph models.
\newblock {\em Electron. J. Statist.}, 3:446--484, 2009.

\bibitem{Zwie:Smith:2011b}
P.~Zwiernik and J.Q. Smith.
\newblock Implicit inequality constraints in a binary tree model.
\newblock {\em Electron. J. Statist.}, 5:1276--1312, 2011.

\bibitem{Zwie:Smith:2011a}
P.~Zwiernik and J.Q. Smith.
\newblock Tree-cumulants and the geometry of binary tree models.
\newblock {\em Bernoulli}, 18(1):290--321, 2012.

\end{thebibliography}

\end{document}